\chardef\bslash=`\\ % p. 424, TeXbook
\newtheorem{thm}{Theorem}[section]
\newtheorem{cor}[thm]{Corollary}
\newtheorem{lem}[thm]{Lemma}
\newtheorem{prop}[thm]{Proposition}
\theoremstyle{definition}
\newtheorem{defn}{Definition}[section]
\newtheorem{rem}{Remark}[section]
\newtheorem{example}{Example}[section]
\theoremstyle{remark}
\newcommand{\bbHr}{\mathbb{H}^2(r)}
\newcommand{\rmd}{\mathrm{d}}
\newcommand{\rme}{\mathrm{e}}
\newcommand{\eval}[2][\right]{\relax
  \ifx#1\right\relax \left.\fi#2#1\rvert}
\begin{document}
\title {Catenaries in Riemannian Surfaces}
\author{Luiz C. B. da Silva }
\address{Department of Physics of Complex Systems,\\ Weizmann Institute of Science,\\ Rehovot 7610001, Israel}
\email{luiz.da-silva@weizmann.ac.il} 
\author {Rafael L\'opez}
\address{Departamento de Geometr\'ia y Topolog\'ia\\
 Universidad de Granada\\
           18071 Granada, Spain}
           \email{rcamino@ugr.es}

\date{Received on MONTH, YEAR}
\issueinfo{VOL}{NUM}{MONTH}{YEAR}
%\doiinfo{10.1007/DOI-NUMBER}
\begin{abstract}
The concept of catenary has been recently extended to the sphere and the hyperbolic plane by the second author [L\'opez, arXiv:2208.13694].   In this work, we define catenaries on any Riemannian surface. A catenary on a surface is a critical point of the potential functional, where we calculate the potential with the intrinsic distance to a fixed reference geodesic. Adopting semi-geodesic coordinates around the reference geodesic, we characterize catenaries using their curvature. Finally, after revisiting the space-form catenaries, we consider  surfaces of revolution (where a Clairaut relation is established), ruled surfaces, and the Gru\v{s}in plane.
% The concept of catenary has been recently extended by the second author to the sphere and the hyperbolic plane \cite{lopezcatenaryform}.   In this work, we generalize this definition on any Riemannian surface. Fixed a geodesic $\ell$  on the surface, a catenary is defined as a critical point of the potential functional, where this potential is calculated with the intrinsic distance to the reference geodesic $\ell$. Using the existence of semi-geodesic coordinates around $\ell$, it is proved a characterization of a catenary in terms of its curvature.  After revisiting the catenaries in space forms, we particularize to the cases of surfaces of revolution, where it is established a Clairaut relation, ruled surfaces and the Gru\v{s}in plane.
\end{abstract}
\maketitle
%\tableofcontents

\section{Introduction}
The shape of an inextensible heavy chain suspended from its  weight attracted the interest of many scientists, beginning with Galileo. The solution curve of this problem is the catenary, and its derivation is due to  Leibniz, Huygens, and Johann Bernoulli in the 17th century. Very recently, the second author has extended the catenary problem to the sphere $\mathbb{S}^2$ and the hyperbolic plane $\mathbb{H}^2$ \cite{lopezcatenaryform}. In the Euclidean plane, the gravitational potential energy of a curve is calculated using the distance to a given straight line. On the other hand, in $\mathbb{S}^2$ and $\mathbb{H}^2$,    the potential energy is measured with the intrinsic distance to a given geodesic.  Later, extensions to the de Sitter and the simply isotropic spaces have also been investigated \cite{silvalopezisotropic,lopezcatenarysitter}.

This paper aims to unify all these generalizations of the catenary problem by considering an abstract $2$-dimensional Riemannian manifold $(\varSigma^2,\rmd s^2)$.  Given a geodesic $\ell$ of $\varSigma^2$, the {\it catenary problem} consists of finding the shape of a curve $\gamma:[a,b]\to\varSigma^2$ which is a critical point of the energy functional
\begin{equation}\label{f1}
\gamma\longmapsto \int_a^b \mbox{dist}(\gamma(t),\ell)\Vert\dot{\gamma}(t)\Vert\,\rmd t,
\end{equation}
where $\mbox{dist}(\gamma(t),\ell)$ is the intrinsic distance on $\varSigma^2$ between the point $\gamma(t)$ and the fixed geodesic $\ell$. For deriving the critical points of the functional \eqref{f1}, we will use standard techniques of calculus of variations. In particular, we need suitable local coordinates on $\varSigma^2$ that reflect the characteristics of the problem. These are the so-called {\it semi-geodesic} coordinates, which exist around a neighborhood of any geodesic thanks to the exponential map. 

The remaining of this work is divided as follows. In Section \ref{sec2}, the variational problem is presented, and we obtain the Euler-Lagrange equation  of the catenary problem, which provides an expression for the curvature of the solution curve in semi-geodesic coordinates on $\varSigma^2$ (Theorem \ref{t2}). In addition, we remark that the catenary problem can be extended to the context where the reference curve % the metric of the surface  $(\varSigma^2,\rmd s^2)$ writes as $\rmd s^2=\rmd u^2+G^2(u,v)\rmd v^2$ regardless if  
$\ell$ is not a geodesic, and apply this theoretical framework to ruled surfaces of Euclidean space $\mathbb{E}^3$. Next, in Section \ref{sec3}, we revisit the catenaries in the space forms $\mathbb{S}^2$ and $\mathbb{H}^2$. Section \ref{sec4} studies the case that $\varSigma^2$ is a surface of revolution in Euclidean space. Surfaces of revolution are highly symmetric, presenting some peculiarities. For instance, we prove a Clairaut-like formula that determines the catenaries in terms of the angle they make with the parallels of the surface (Theorem \ref{tclairaut}). As explicit examples of surfaces of revolution, we calculate the catenaries in circular cones and the catenoid. Finally, in Section \ref{sec5}, we discuss catenaries on the so-called Gru\v{s}in plane.

%%%%%%%%%%%
\section{The catenary problem}\label{sec2}

In this section, we formulate the catenary problem and characterize its solutions. Let $(\varSigma^2,\rmd s^2)$ be a   Riemannian surface and let $\ell:I\to\varSigma^2$ be a fixed geodesic with arc-length parameter $v$. If $\beta_v(u)$ denotes the geodesic emanating from $\ell(v)$ with unit velocity $X(v)$ orthogonal to $\ell(v)$, then there is a neighborhood $\mathcal{U}\subseteq\varSigma^2$ of $\ell(v)$  parametrized by 
\begin{equation*}
\psi(u,v) = \beta_v(u) = \exp_{\ell(v)}(uX(v)).
\end{equation*}
This coordinate system is called {\it semi-geodesic coordinates} \cite{struik}. Note that the geodesic $\ell$ is the parametric curve $v\mapsto \psi(0,v)$. The metric $\rmd s^2$ of  $\varSigma^2$ is then given by 
\begin{equation}\label{G}
    \rmd s^2 = \rmd u^2 + G^2(u,v)\,\rmd v^2,
\end{equation}
where $G$ is a smooth positive function. 

Since $\rmd u\leq \rmd s$,  the distance from a point $\psi(u,v)\in \mathcal{U}$ to $\ell$ is given by $\vert u\vert$. Let $\mathcal{U}_+=\{\psi(u,v)\in \mathcal{U}:u>0\}$. If $\gamma\colon [a,b]\to\mathcal{U}_+$, $\gamma(t)=\psi(u(t),v(t))$,  represents an inextensible heavy chain of constant linear density $\sigma$, the potential energy of $\gamma$ with respect to the reference line $\ell$ is
$$\int_a^b\sigma\, \mbox{dist}(\gamma(t),\ell)\, \rmd s=\int_a^b\sigma\, u \Vert \dot{\gamma}\Vert\, \rmd t.$$
Here, $\dot{\gamma}$ stands for the derivative of $\gamma$ with respect to $t$. From now on, we assume that $\sigma=1$.

We also generalize the hanging chain problem in $\varSigma^2$ by introducing a real parameter $\alpha$ in the potential energy functional. More precisely, we have  

\begin{defn}\label{def::AlphaCatenaryOnSurf}
Let $\alpha\in\mathbb{R}$. A curve $\gamma\colon[a,b]\to \mathcal{U}_{+}$ is said to be  an \emph{$\alpha$-catenary} with respect to $\ell$ if $\gamma$ is a critical point   of the functional
 $$\mathcal{E}[\gamma] = \int_a^b\mbox{dist}(\gamma(t),\ell)^\alpha\, \rmd s.$$ 
\end{defn}

Using semi-geodesic coordinates $(u,v)$, an $\alpha$-catenary  $\gamma(t)=\psi(u(t),v(t))$ is a critical point of the functional
\begin{equation}\label{energy}
    \mathcal{E}[\gamma] =\mathcal{E}[u(t),v(t)]= \int_a^bu^{\alpha} \Vert\dot{\gamma}(t)\Vert\,\rmd t = \int_a^bu^{\alpha}\sqrt{\dot{u}^2+\dot{v}^2G^2(u,v)}\,\rmd t.
\end{equation}
If $\alpha=1$, we simply say that $\gamma$ is a {\it catenary}.

\begin{rem}\label{re1}
Note that $u^\alpha \Vert\dot{\gamma}(t)\Vert\, \rmd t$ is the arc-length element of the conformal metric $\rmd\tilde{s}^2=u^{2\alpha}\rmd s^2$. Thus, critical points of $\mathcal{E}$ coincide with the geodesics of the space $(\varSigma^2, \rmd\tilde{s}^2)$. In particular, this identification guarantees the local existence and uniqueness of $\alpha$-catenaries.
\end{rem}

From now on, all curves will be contained in   $\mathcal{U}_+$. In what follows, it will prove useful to have a suitable expression for the curvature $\kappa$ of a curve in semi-geodesic coordinates. 

\begin{lem}\label{lemma::CurvatureSemiGeoCoord}
 If $\gamma(t)=\psi(u(t),v(t))$ is a curve in $\mathcal{U}_+$ parametrized by semi-geodesic coordinates,  then its (signed) geodesic curvature $\kappa$ is given by
\begin{equation}\label{kk}
    \kappa = -\frac{\dot{v}(G_v\dot{u}\dot{v}+2G_u\dot{u}^2+G^2G_u\dot{v}^2)+G(\dot{u}\ddot{v}-\ddot{u}\dot{v})}{(\dot{u}^2+G^2\dot{v}^2)^{\frac{3}{2}}}.
\end{equation}
\end{lem}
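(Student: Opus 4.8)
The plan is to compute the signed geodesic curvature of $\gamma$ directly from its intrinsic definition and then specialize to the metric \eqref{G}. Recall that, with $T=\dot\gamma/\Vert\dot\gamma\Vert$ the unit tangent and $n$ the unit normal obtained by rotating $T$ clockwise (i.e.\ by $-\pi/2$) with respect to the orientation determined by $(u,v)$, the signed geodesic curvature satisfies
\[
\kappa=\frac{\langle\nabla_{\dot\gamma}\dot\gamma,\,n\rangle}{\Vert\dot\gamma\Vert^{2}}=\frac{\langle\nabla_{\dot\gamma}\dot\gamma,\,J\dot\gamma\rangle}{\Vert\dot\gamma\Vert^{3}},
\]
where $J$ denotes rotation by $-\pi/2$ (an isometry, so that $\Vert J\dot\gamma\Vert=\Vert\dot\gamma\Vert$). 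This reduces the lemma to three short computations: the Christoffel symbols of \eqref{G}, the covariant acceleration $\nabla_{\dot\gamma}\dot\gamma$, and one inner product.

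First I would read off the Christoffel symbols of $\rmd s^{2}=\rmd u^{2}+G^{2}\,\rmd v^{2}$. Since $g_{uu}=1$, $g_{vv}=G^{2}$ and $g_{uv}=0$, the only nonvanishing ones are
\[
\Gamma^{u}_{vv}=-GG_u,\qquad \Gamma^{v}_{uv}=\Gamma^{v}_{vu}=\frac{G_u}{G},\qquad \Gamma^{v}_{vv}=\frac{G_v}{G},
\]
so in particular the curves $v=\mathrm{const}$ are geodesics, as expected in semi-geodesic coordinates. Substituting $\dot\gamma=\dot u\,\partial_u+\dot v\,\partial_v$ into the geodesic operator $(\nabla_{\dot\gamma}\dot\gamma)^{k}=\ddot x^{k}+\Gamma^{k}_{ij}\dot x^{i}\dot x^{j}$ then gives
\[
\nabla_{\dot\gamma}\dot\gamma=\bigl(\ddot u-GG_u\dot v^{2}\bigr)\,\partial_u+\Bigl(\ddot v+\tfrac{2G_u}{G}\dot u\dot v+\tfrac{G_v}{G}\dot v^{2}\Bigr)\,\partial_v.
\]

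Next, since $\{\partial_u,\ \tfrac1G\partial_v\}$ is a positively oriented orthonormal frame, rotation by $-\pi/2$ sends $\partial_u\mapsto-\tfrac1G\partial_v$ and $\partial_v\mapsto G\partial_u$, whence $J\dot\gamma=G\dot v\,\partial_u-\tfrac{\dot u}{G}\,\partial_v$. Pairing this with $\nabla_{\dot\gamma}\dot\gamma$ using $\langle\partial_u,\partial_u\rangle=1$, $\langle\partial_v,\partial_v\rangle=G^{2}$, $\langle\partial_u,\partial_v\rangle=0$ and collecting terms yields
\[
\langle\nabla_{\dot\gamma}\dot\gamma,\,J\dot\gamma\rangle=-\dot v\bigl(G_v\dot u\dot v+2G_u\dot u^{2}+G^{2}G_u\dot v^{2}\bigr)-G(\dot u\ddot v-\ddot u\dot v),
\]
and dividing by $\Vert\dot\gamma\Vert^{3}=(\dot u^{2}+G^{2}\dot v^{2})^{3/2}$ is exactly \eqref{kk}. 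There is no genuine obstacle here; the only points requiring care are fixing the orientation of the unit normal, which is what pins down the overall sign in \eqref{kk}, and keeping the algebra straight when expanding the covariant derivative and the inner product. As an alternative route one could invoke Liouville's formula for the geodesic curvature in orthogonal coordinates, using that the curves $v=\mathrm{const}$ are geodesics and the curves $u=\mathrm{const}$ have geodesic curvature $-G_u/G$; this trades the explicit computation above for a classical identity (cf.\ \cite{struik}).
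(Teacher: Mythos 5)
Your computation is correct and lands exactly on \eqref{kk}: the Christoffel symbols you list agree with \eqref{co}, the covariant acceleration is right, and pairing it with $G\dot v\,\partial_u-\tfrac{\dot u}{G}\partial_v$ does produce the numerator of \eqref{kk} with the stated sign. The route differs from the paper's only in how the general curvature expression enters: the paper quotes the classical coordinate formula for the geodesic curvature in terms of the $\Gamma_{ij}^k$ and then substitutes the symbols of the semi-geodesic metric, whereas you re-derive that identity for this metric from the definition $\kappa=\langle\nabla_{\dot\gamma}\dot\gamma,J\dot\gamma\rangle/\Vert\dot\gamma\Vert^{3}$ with $J$ the rotation by $-\pi/2$. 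What your version buys is self-containedness and, more importantly, an explicit fixing of the orientation convention: the sign in \eqref{kk} is exactly the one attached to the normal obtained by rotating the tangent clockwise, and making that choice visible is genuinely useful, since the unit normal $\mathbf{n}$ written later in the paper (Eq. \eqref{pt1}) is the opposite rotation, so the reader otherwise has to infer the convention from the formula itself. The paper's version is shorter because it leans on the standard identity (as would your suggested Liouville-formula alternative, which is also fine here since the curves $v=\mathrm{const}$ are geodesics); the algebraic core --- the Christoffel symbols and the collection of terms in the numerator --- is the same in both arguments.
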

\begin{proof}

Let $\Gamma_{ij}^k$ be the Christoffel symbols associated with the metric $\rmd{s}^2$ given in \eqref{G}. The expression of $\kappa$ in terms of $\Gamma_{ij}^k$  is
\[
    \kappa = \frac{\sqrt{g_{ij}}\left(\Gamma_{22}^1\dot{v}^3-\Gamma_{11}^2\dot{u}^3-(2\Gamma_{12}^2-\Gamma_{11}^1)\dot{u}^2\dot{v}+(2\Gamma_{12}^1-\Gamma_{22}^2)\dot{u}\dot{v}^2+\ddot{u}\dot{v}-\dot{u}\ddot{v}\right)}{(g_{11}\dot{u}^2+2g_{12}\dot{u}\dot{u}+g_{22}\dot{v}^2)^{3/2}}.
\]
For the metric $\rmd{s}^2$, we have $g_{11}=1$, $g_{12}=0$, and $g_{22}=G^{2}$. Using the well-known expression for the Christoffel symbols, $\Gamma_{ij}^k=\frac{g^{k\ell}}{2}(\partial_ig_{j\ell}+\partial_jg_{i\ell}-\partial_{\ell}g_{ij})$, where $u^1=u$ and $u^2=v$, we obtain $\Gamma_{11}^1 = \Gamma_{12}^1 =  \Gamma_{11}^2 = 0$, and 
\begin{equation}\label{co}
  \Gamma_{22}^1 = -GG_u,  \quad \Gamma_{12}^2 = \frac{G_u}{G},  \quad \Gamma_{22}^2 =\frac{G_v}{G}.
\end{equation}
Finally, substituting the values of $\Gamma_{ij}^k$ in the expression for $\kappa$ proves Eq. \eqref{kk}.
\end{proof}
 
% \rafael{I have changed the next result, not for arbitrary f, but for $u^\alpha$. In the old version that could be useful for the part of hyperbolic, but now I think that that may confuse. Anyway, the computations are after end{document}}

\begin{thm}\label{t1}
 Let $\gamma(t)=\psi(u(t),v(t))$ be a smooth curve  parametrized by semi-geodesic coordinates. Then, $\gamma$ is an $\alpha$-catenary if, and only if, it satisfies
 \begin{equation}\label{el0}
  \dot{v}\, u^\alpha G\left(\kappa-\alpha\frac{G\dot{v}}{u \Vert\dot{\gamma}\Vert}\right) = 0 \quad \mbox{and} \quad
   \dot{u}\,u^\alpha G\left(\kappa-\alpha\frac{G\dot{v}}{u \Vert\dot{\gamma}\Vert}\right)  = 0,
\end{equation}
% \begin{equation}\label{el0}
% \begin{split}
%   \dot{v} u^\alpha G\left(\kappa-\alpha\frac{G\dot{v}}{u \Vert\dot{\gamma}\Vert}\right) & = 0 \\
%    \dot{u}u^\alpha G\left(\kappa-\alpha\frac{G\dot{v}}{u \Vert\dot{\gamma}\Vert}\right)  &= 0,
%    \end{split}
% \end{equation}
where  $\kappa$ is the geodesic curvature of   $\gamma$. In addition, if   $\gamma$ is regular, then $\gamma$ is an $\alpha$-catenary if, and only if,  
\begin{equation}\label{k-c}
    \kappa=\alpha\frac{G\dot{v}}{u \Vert\dot{\gamma}\Vert}=\alpha\frac{G\dot{v}}{u\sqrt{\dot{u}^2+G^2\dot{v}^2}}.
\end{equation}
\end{thm}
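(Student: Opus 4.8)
The plan is to take the first variation of $\mathcal{E}$ directly from \eqref{energy} and to recognise the curvature formula of Lemma~\ref{lemma::CurvatureSemiGeoCoord} inside the resulting Euler--Lagrange expressions. Write $W=\Vert\dot\gamma\Vert=\sqrt{\dot u^{2}+G^{2}\dot v^{2}}$ and let $L(u,v,\dot u,\dot v)=u^{\alpha}W$ be the Lagrangian of \eqref{energy}, so that (for variations fixing the endpoints, after integration by parts) $\gamma$ is an $\alpha$-catenary if and only if $E_u:=\tfrac{\rmd}{\rmd t}\partial_{\dot u}L-\partial_uL=0$ and $E_v:=\tfrac{\rmd}{\rmd t}\partial_{\dot v}L-\partial_vL=0$. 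The four partial derivatives are immediate:
\[
\partial_uL=\alpha u^{\alpha-1}W+\frac{u^{\alpha}GG_u\dot v^{2}}{W},\qquad
\partial_vL=\frac{u^{\alpha}GG_v\dot v^{2}}{W},\qquad
\partial_{\dot u}L=\frac{u^{\alpha}\dot u}{W},\qquad
\partial_{\dot v}L=\frac{u^{\alpha}G^{2}\dot v}{W}.
\]

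Next I would expand the two total $t$-derivatives, eliminating $\dot W$ by $W\dot W=\dot u\ddot u+GG_u\dot u\dot v^{2}+GG_v\dot v^{3}+G^{2}\dot v\ddot v$, multiply through by $W^{3}$, and collect terms. The assertion to verify is that the outcome simplifies to
\[
E_u=\dot v\,u^{\alpha}G\Bigl(\kappa-\alpha\tfrac{G\dot v}{uW}\Bigr),\qquad
E_v=-\dot u\,u^{\alpha}G\Bigl(\kappa-\alpha\tfrac{G\dot v}{uW}\Bigr),
\]
with $\kappa$ as in \eqref{kk}; this is exactly the system \eqref{el0} (the sign of $E_v$ being irrelevant for an equation set to zero). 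The curvature must appear: the $\ddot u,\ddot v$ terms assemble into a multiple of $(\ddot u\dot v-\dot u\ddot v)$, and together with the $G_u,G_v$ terms they reproduce precisely $W^{3}\kappa$ from \eqref{kk} times the velocity factor $\dot v$ (resp.\ $-\dot u$), while the remaining terms contribute the $-\alpha G\dot v/(uW)$ term inside the parenthesis. A reliable internal check on the algebra is the identity $\dot u\,E_u+\dot v\,E_v\equiv 0$, which holds because $L$ is homogeneous of degree one in $(\dot u,\dot v)$ — equivalently, because $\mathcal{E}$ is parametrization invariant, cf.\ Remark~\ref{re1} — and which already pins the two components down up to the interchange $\dot v\leftrightarrow-\dot u$.

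For the regular case, suppose $(\dot u(t),\dot v(t))\neq(0,0)$ for every $t$; since $u>0$ and $G>0$, the factor $u^{\alpha}G$ never vanishes. On the open set where $\dot v\neq0$ the first equation of \eqref{el0} forces \eqref{k-c}, and on the open set where $\dot u\neq0$ the second one does; since these two sets cover $[a,b]$ and both sides of \eqref{k-c} are continuous, \eqref{k-c} holds on all of $[a,b]$. Conversely, if \eqref{k-c} holds then both parentheses in \eqref{el0} vanish identically, so $\gamma$ is an $\alpha$-catenary.

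The only genuine difficulty is the bookkeeping in the second step: expanding $\tfrac{\rmd}{\rmd t}\partial_{\dot u}L$ and $\tfrac{\rmd}{\rmd t}\partial_{\dot v}L$, clearing $W^{3}$, and confirming that the many cross terms cancel so that the numerator of \eqref{kk} emerges intact — routine but error-prone, with the homogeneity identity as a safeguard. A more conceptual alternative, worth recording as a remark, bypasses most of this: by Remark~\ref{re1}, $\alpha$-catenaries are the geodesics of $\rmd\tilde s^{2}=u^{2\alpha}\rmd s^{2}$, and under a conformal change $\tilde g=\rme^{2\phi}g$ the signed geodesic curvature transforms as $\tilde\kappa=\rme^{-\phi}\bigl(\kappa-\partial_{n}\phi\bigr)$ with $n$ the unit normal to the curve; taking $\phi=\alpha\ln u$ and using that in semi-geodesic coordinates the unit normal is $n=\tfrac1{GW}(\mp G^{2}\dot v,\pm\dot u)$, one gets $\partial_{n}\phi=\mp\alpha G\dot v/(uW)$, so (with the orientation matching the sign convention of \eqref{kk}) the geodesic condition $\tilde\kappa=0$ is exactly \eqref{k-c}. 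This shortcut yields \eqref{k-c} only on the regular part, however; the full system \eqref{el0}, with its velocity prefactors, still needs the first-variation computation, those prefactors being precisely the directions annihilated by the parametrization invariance.
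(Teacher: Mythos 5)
Your proposal follows the paper's own route: it sets up the Euler--Lagrange equations for $L=u^{\alpha}\Vert\dot{\gamma}\Vert$, computes the same four partial derivatives, and recognizes the numerator of Lemma~\ref{lemma::CurvatureSemiGeoCoord}, Eq.~\eqref{kk}, inside the expanded equations, arriving at exactly \eqref{el0} and then \eqref{k-c} on the regular set --- the paper carries out in full the algebra you only sketch (for the first equation, treating the second ``analogously'', which your homogeneity identity $\dot{u}E_u+\dot{v}E_v\equiv 0$ in fact justifies). Your additions (the parametrization-invariance check, the careful covering argument $\{\dot{u}\neq0\}\cup\{\dot{v}\neq0\}=[a,b]$ in the regular case, and the conformal-curvature shortcut in the spirit of Remark~\ref{re1}) are sound but inessential; the argument is correct and essentially the same as the paper's.
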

\begin{proof}
Since the Lagrangian associated with $\mathcal{E}$ is $L=u^\alpha \Vert\dot{\gamma}\Vert$, the corresponding Euler-Lagrange equations   are
\begin{equation}\label{el1}
\frac{\partial L}{\partial u}-\frac{\rmd}{\rmd t}\frac{\partial L}{\partial \dot{u}} = 0 \quad \mbox{and} \quad \frac{\partial L}{\partial v}-\frac{\rmd}{\rmd t}\frac{\partial L}{\partial \dot{v}} = 0.
\end{equation}

For the first equation, we need
\begin{equation*}
    \frac{\partial L}{\partial u} =\alpha u^{\alpha-1}\, \Vert\dot{\gamma}\Vert+u^\alpha\,\frac{GG_u\dot{v}^2}{ \Vert\dot{\gamma}\Vert}
\quad \mbox{and} \quad  \frac{\partial L}{\partial \dot{u}} = u^\alpha\frac{\dot{u}}{ \Vert\dot{\gamma}\Vert}.
\end{equation*}
The  derivative of $\partial L/\partial \dot{u}$ with respect to $t$  is
\begin{eqnarray*}
    \frac{\rmd}{\rmd t}\frac{\partial L}{\partial \dot{u}}  
    & = & \frac{\alpha u^{\alpha-1}\dot{u}^2}{ \Vert\dot{\gamma}\Vert}+\frac{u^\alpha}{ \Vert\dot{\gamma}\Vert^2}\left(\ddot{u} \Vert\dot{\gamma}\Vert-\dot{u}\frac{\dot{u}\ddot{u}+G(\dot{u}G_u+\dot{v}G_v)\dot{v}^2+G^2\dot{v}\ddot{v}}{ \Vert\dot{\gamma}\Vert}\right) \nonumber \\
        & = & \frac{\alpha u^{\alpha-1}\dot{u}^2}{ \Vert\dot{\gamma}\Vert}+\frac{\dot{v}u^\alpha G}{ \Vert\dot{\gamma}\Vert^3}[G(\dot{v}\ddot{u}-\dot{u}\ddot{v})-\dot{u}\dot{v}(G_u\dot{u}+G_v\dot{v})].
\end{eqnarray*}
Thus, the first Euler-Lagrange equation  becomes
\begin{eqnarray*} 
    0     & = & \frac{\alpha u^{\alpha-1}G^2\dot{v}^2+u^\alpha GG_u\dot{v}^2}{ \Vert\dot{\gamma}\Vert}  -\dot{v} u^\alpha G\frac{G(\dot{v}\ddot{u}-\dot{u}\ddot{v})-\dot{u}\dot{v}(G_u\dot{u}+G_v\dot{v})}{ \Vert\dot{\gamma}\Vert^3} \nonumber \\
         & = & \frac{\alpha u^{\alpha-1}G^2\dot{v}^2}{ \Vert\dot{\gamma}\Vert} -  \dot{v} u^\alpha G\frac{\dot{v}(G\ddot{u}-G_v\dot{u}\dot{v}-2G_u\dot{u}^2-G^2G_u\dot{v}^2)-G\dot{u}\ddot{v}}{ \Vert\dot{\gamma}\Vert^3}.
\end{eqnarray*}
Replacing the value of $\kappa$ given in \eqref{kk} in the last part of the above identity,  we can write
\begin{equation}
    0 =  \frac{\alpha u^{\alpha-1}G^2\dot{v}^2}{ \Vert\dot{\gamma}\Vert} -  \dot{v}u^\alpha G\kappa = -\dot{v} u^\alpha G\left(\kappa-\alpha  \frac{G\dot{v}}{u \Vert\dot{\gamma}\Vert}\right).
\end{equation}
This is the first equation of \eqref{el0}. For the second Euler-Lagrange equation, we may proceed analogously.  
\end{proof}

We can alternatively characterize $\alpha$-catenaries without using the curvature $\kappa$ thanks to Eq.  \eqref{kk}. This  will prove to be useful in explicit calculations. We have

\begin{cor}\label{c2} Let $\ell:I\to\varSigma^2$ be a geodesic. A regular curve  $\gamma(t)=\psi(u(t),v(t))$  parametrized by the semi-geodesic coordinates  is an $\alpha$-catenary with respect to $\ell$ if, and only if,
\begin{equation}\label{et3}
\frac{\alpha\dot{v}G}{u}\Vert\dot{\gamma}\Vert^2=-\dot{v}( G_v\dot{u}\dot{v}+2G_u\dot{u}^2+G^2G_u\dot{v}^2)-G(\dot{u}\ddot{v}-\ddot{u}\dot{v}).
\end{equation}
\end{cor}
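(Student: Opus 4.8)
The plan is to derive Corollary~\ref{c2} directly from Theorem~\ref{t1} by substituting the explicit expression \eqref{kk} for the geodesic curvature $\kappa$ into the regular-curve characterization \eqref{k-c}. Since $\gamma$ is assumed regular, we have $\Vert\dot\gamma\Vert=\sqrt{\dot u^2+G^2\dot v^2}>0$, so \eqref{k-c} is equivalent to $u\Vert\dot\gamma\Vert\,\kappa=\alpha G\dot v$. The only ingredient needed is \lemref{lemma::CurvatureSemiGeoCoord}, which gives $\kappa$ as a quotient with denominator $(\dot u^2+G^2\dot v^2)^{3/2}=\Vert\dot\gamma\Vert^3$.

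First I would write \eqref{k-c} in the cleared form
\begin{equation*}
u\Vert\dot\gamma\Vert\,\kappa = \alpha G\dot v,
\end{equation*}
and then replace $\kappa$ using \eqref{kk}. The left-hand side becomes
\begin{equation*}
u\Vert\dot\gamma\Vert\cdot\left(-\frac{\dot v(G_v\dot u\dot v+2G_u\dot u^2+G^2G_u\dot v^2)+G(\dot u\ddot v-\ddot u\dot v)}{\Vert\dot\gamma\Vert^3}\right)
= -\frac{u\bigl(\dot v(G_v\dot u\dot v+2G_u\dot u^2+G^2G_u\dot v^2)+G(\dot u\ddot v-\ddot u\dot v)\bigr)}{\Vert\dot\gamma\Vert^2}.
\end{equation*}
Setting this equal to $\alpha G\dot v$ and multiplying through by $\Vert\dot\gamma\Vert^2/u$ (legitimate since $u>0$ on $\mathcal{U}_+$ and $\Vert\dot\gamma\Vert>0$ by regularity) yields exactly \eqref{et3}. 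The reverse implication follows by reading the same chain of equivalences backwards, so regularity plus \eqref{et3} implies \eqref{k-c}, hence $\gamma$ is an $\alpha$-catenary by Theorem~\ref{t1}.

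There is essentially no obstacle here: the statement is a cosmetic reformulation of Theorem~\ref{t1} obtained by substituting the curvature formula, and the only things to check are that no division by zero occurs (guaranteed by $u>0$ and regularity) and that the algebra of clearing denominators is done correctly. The mild bookkeeping point worth stating explicitly is that each step in the manipulation is an equivalence, not merely an implication, which is what makes the "if and only if" go through in both directions. I expect the proof to be two or three lines, simply instructing the reader to substitute \eqref{kk} into \eqref{k-c} and clear the common factor $\Vert\dot\gamma\Vert^2$.
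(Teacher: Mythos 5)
Your proposal is correct and matches the paper's own argument: the paper proves Corollary~\ref{c2} precisely by combining Eqs.~\eqref{kk} and~\eqref{k-c}, i.e.\ substituting the semi-geodesic curvature formula into the regular-curve characterization of Theorem~\ref{t1} and clearing the factor $\Vert\dot{\gamma}\Vert^{2}/u$. Your extra remarks on $u>0$, regularity, and the steps being equivalences are just the bookkeeping the paper leaves implicit.
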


\begin{proof} It is a consequence of Eqs. \eqref{kk} and \eqref{k-c}.
\end{proof}

% We now provide a coordinate-free characterization for $\alpha$-catenaries.    As we have pointed out, the translations along the $u$-direction are isometries on $\varSigma$.  Note also that  the vector field $X=\partial_u$ is a Killing vector field on $\varSigma^2$ that points in the direction of the velocity of the geodesics that realize the distance  $\mbox{dist}(\gamma(t),\ell)=u$ from a point of $\gamma$ to $\ell$. Then we obtain the following characterization of the $\alpha$-catenaries involving the curvature $\kappa$ of the curve and the Killing vector field $X$.

We now provide a coordinate-free characterization for $\alpha$-catenaries. Consider the vector field $X=\partial_u$ that corresponds to the velocity field of the geodesics with length equal to $\mbox{dist}(\gamma(t),\ell)=u$. (If $G(u,v)=G(v)$, then the translations $(u,v)\mapsto (u+\mu,v)$ are isometries of $(\varSigma,\rmd s^2)$ and, consequently, $X$ is a Killing vector field.) We now obtain the following characterization of the $\alpha$-catenaries involving the curvature $\kappa$ of $\gamma$ and the geodesic velocity vector field $X$.

\begin{thm}\label{t2}
Let $\ell:I\to\varSigma^2$ be a geodesic. A regular curve  $\gamma(t)=\psi(u(t),v(t))$  parametrized by semi-geodesic coordinates  is an $\alpha$-catenary with respect to $\ell$ if, and only if, its curvature $\kappa$ satisfies  
\begin{equation}\label{free}
    \kappa = \alpha\frac{\langle\mathbf{n}, X\rangle}{\mathrm{dist}(\gamma,\ell)},
\end{equation}
where $\mathbf{n}$ is the unit normal vector to $\gamma$.
\end{thm}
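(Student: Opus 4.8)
The plan is to derive \eqref{free} directly from the coordinate characterization \eqref{k-c} by reinterpreting the quantity $G\dot v/\Vert\dot\gamma\Vert$ as an inner product. First I would record the basic geometry of the semi-geodesic frame: from the metric \eqref{G} one has $\langle\partial_u,\partial_u\rangle=1$, $\langle\partial_u,\partial_v\rangle=0$, $\langle\partial_v,\partial_v\rangle=G^2$, so that $\{\partial_u,\,G^{-1}\partial_v\}$ is a positively oriented orthonormal frame along $\gamma$. In these terms the velocity is $\dot\gamma=\dot u\,\partial_u+\dot v\,\partial_v$, with $\Vert\dot\gamma\Vert=\sqrt{\dot u^2+G^2\dot v^2}$, so the unit tangent is $\mathbf t=\Vert\dot\gamma\Vert^{-1}(\dot u\,\partial_u+\dot v\,\partial_v)$.

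Next I would write down the unit normal $\mathbf n$ obtained by rotating $\mathbf t$ by $+\pi/2$ in the oriented orthonormal frame. Expressing $\mathbf t$ in the orthonormal frame as $\Vert\dot\gamma\Vert^{-1}(\dot u\,\partial_u + G\dot v\cdot G^{-1}\partial_v)$ and rotating, one gets
\begin{equation*}
  \mathbf n = \frac{1}{\Vert\dot\gamma\Vert}\left(-G\dot v\,\partial_u + \frac{\dot u}{G}\,\partial_v\right),
\end{equation*}
up to the sign convention matching the one used for $\kappa$ in \lemref{lemma::CurvatureSemiGeoCoord}. I would then simply compute $\langle\mathbf n, X\rangle$ with $X=\partial_u$: using $\langle\partial_u,\partial_u\rangle=1$ and $\langle\partial_u,\partial_v\rangle=0$, this collapses to $\langle\mathbf n,X\rangle=-G\dot v/\Vert\dot\gamma\Vert$. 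Together with $\mathrm{dist}(\gamma,\ell)=u$ (established in the text before \defref{def::AlphaCatenaryOnSurf}), this turns the right-hand side of \eqref{k-c} into $-\alpha\langle\mathbf n,X\rangle/\mathrm{dist}(\gamma,\ell)$, and the theorem follows from \thmref{t1} after matching signs.

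The only genuine subtlety — the step I would be most careful about — is the sign: one must check that the orientation/rotation convention used to define $\mathbf n$ from $\mathbf t$ is exactly the one implicit in the formula \eqref{kk} for the signed curvature $\kappa$ (equivalently, that $\nabla_{\mathbf t}\mathbf t=\kappa\,\mathbf n$ with this $\mathbf n$). If the convention forces $\mathbf n=\Vert\dot\gamma\Vert^{-1}(G\dot v\,\partial_u - G^{-1}\dot u\,\partial_v)$ instead, then $\langle\mathbf n,X\rangle=+G\dot v/\Vert\dot\gamma\Vert$ and \eqref{free} comes out with the stated sign and no spurious minus. I would pin this down by evaluating both $\kappa$ from \eqref{kk} and the candidate $\langle\mathbf n,X\rangle/u$ on a simple test curve — for instance a curve $u=\mathrm{const}$, $v=t$ in the flat model $G\equiv 1$, which should be a circle of curvature $1/u$ — and then read off the correct sign. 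Once the sign is fixed, the rest is the short linear-algebra computation above, and no further calculus of variations is needed because \thmref{t1} already did that work.
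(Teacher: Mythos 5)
Your argument is essentially the paper's own proof: the paper likewise writes $\mathbf{n}=\frac{1}{\Vert\dot{\gamma}\Vert}\left(-\dot{v}G\,\partial_u+\frac{\dot{u}}{G}\,\partial_v\right)$, computes $\langle\mathbf{n},X\rangle=-G\dot{v}/\Vert\dot{\gamma}\Vert$, uses $\mathrm{dist}(\gamma,\ell)=u$, and invokes Theorem \ref{t1}, so your proposal is correct and the only care needed is exactly the orientation convention you flag, which the paper leaves implicit. One small correction to your sanity check: with $G\equiv 1$ the curve $u=\mathrm{const}$ is a straight line of zero curvature, so to test the sign you should instead take the flat metric in polar-type coordinates, $G(u,v)=u$, where $u=\mathrm{const}$ is a circle and \eqref{kk} together with \eqref{pt1} lets you pin down the convention.
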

\begin{proof}
The normal vector $\mathbf{n}$ is given by 
\begin{equation}\label{pt1}
 \mathbf{n}=\frac{1}{\Vert\dot{\gamma}\Vert}\left(-\dot{v}G\frac{\partial}{\partial u}+\frac{\dot{u}}{G}\frac{\partial}{\partial v}\right).
\end{equation}
Thus, $\langle\mathbf{n},X\rangle=-\dot{v}G/\Vert\dot{\gamma}\Vert$,  and the result follows immediately. 
\end{proof}

\begin{rem} 
The motivation of the catenary problem, i.e., $\alpha=1$, is finding a hanging chain suspended from its ends. If the physical chain is inextensible, its length is prescribed. In the variational formulation of the catenaries, a Lagrange multiplier must be considered due to the condition $\int_a^b\Vert\dot{\gamma}\Vert\, dt=c$. Thus, the functional $\mathcal{E}$ is replaced by $\int_a^b(u+\lambda)\Vert\dot{\gamma}\Vert\, dt$. Repeating the computations, a regular curve $\gamma$ is a critical point of this functional if, and only if, its curvature satisfies 
 \begin{equation*} 
    \kappa=\alpha\frac{G\dot{v}}{(u+\lambda) \Vert\dot{\gamma}\Vert}.
\end{equation*}
Unless $G(u,v)=G(v)$, translations of the type $(u,v)\mapsto (u+\mu,v)$ are not isometries of $(\Sigma,\rmd s^2)$. Thus, the introduction of a Lagrange multiplier leads to a distinct problem.
% it is possible to do a translation so this equation coincides with \eqref{k-c}.
\end{rem}

\subsection{Catenaries with respect to a non-geodesic reference curve}

The validity of Theorems    \ref{t1} and   \ref{t2} does not depend on the fact that the curve $v\mapsto\psi(0,v)$ is a geodesic. Indeed, analyzing the proofs of all the results obtained so far  reveals that their validity relies solely on the special form  of the metric in a neighborhood $\mathcal{U}$ of the curve $v\mapsto\psi(0,v)$, Eq. \eqref{G}. Consequently, in this paper, we shall drop the assumption that $\ell$ is a geodesic. Thus, we may define

\begin{defn}\label{Def::alpCatenaryRespectToNonGeod} 
Let $(\varSigma^2,\rmd s^2)$ be a Riemannian surface parametrized by $\psi(u,v)$ and with metric $\rmd s^2=\rmd u^2+G^2(u,v)\rmd v^2$. Defining $\ell(v)=\psi(0,v)$, we say that $\gamma\colon [a,b]\to \mathcal{U}_+$ is an \emph{$\alpha$-catenary with respect to $\ell$} if $\gamma$ is a critical point of the  functional \eqref{energy}, where $\mathcal{U}_+=\{\psi(u,v)\in\Sigma^2: u>0\}$ and $\alpha\in\mathbb{R}$.
\end{defn}

Consequently, Theorems \ref{t1} and \ref{t2} and Corollary \ref{c2} are valid for this definition of $\alpha$-catenary. 

The characterization of $\alpha$-catenaries in Theorem \ref{t1} only involves the metric of the surface. This is also expected by Remark \ref{re1}. Thus, the concept of $\alpha$-catenary is preserved by local isometries. So, the following result is immediate.

\begin{prop}\label{iso}
    If two   surfaces $\varSigma^2_1$ and $\varSigma^2_2$  are parametrized  by the same semi-geodesic coordinates, then the $\alpha$-catenaries in both surfaces coincide.
\end{prop}
    
We finish this section investigating when the coordinate curves in the semi-geodesic coordinates systems are $\alpha$-catenaries. It is immediate from Corollary \ref{c2} the following result.

\begin{cor}\label{c3} Let $(\varSigma^2,\rmd s^2)$ be a   Riemannian surface parametrized by $\psi(u,v)$, where  $\rmd s^2=\rmd u^2+G^2(u,v)\rmd v^2$.  Let $\ell(v)=\psi(0,v)$. Then:
\begin{enumerate}
\item Any coordinate curve $u\mapsto \psi(u,v_0)$ is an $\alpha$-catenary.
\item  A coordinate curve $v\mapsto \psi(u_0,v)$ is an $\alpha$-catenary if, and only if,  the equation $\alpha G+u_0 G_u=0$ is valid along the points of the curve.
 \end{enumerate}
It follows that, if an $\alpha$-catenary $\gamma$  is tangent to some coordinate curve $v=v_0$, then $\gamma$ must be the coordinate curve $u\mapsto \psi(u,v_0)$. (See Remark \ref{re1}.) 
\end{cor}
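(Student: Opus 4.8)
The plan is to derive Corollary~\ref{c3} as a direct specialization of the equation in Corollary~\ref{c2}, by plugging in the two families of coordinate curves and simplifying. For part (1), I would take the curve $t\mapsto\psi(t,v_0)$, so that $v(t)\equiv v_0$, hence $\dot v\equiv0$ and $\ddot v\equiv0$. Substituting into \eqref{et3}, every term on the right-hand side carries a factor of $\dot v$, so the right-hand side vanishes identically; the left-hand side also carries a factor $\dot v$ and vanishes. Thus \eqref{et3} is satisfied trivially, and since $\dot u\neq0$ the curve is regular, so it is an $\alpha$-catenary for every $v_0$ and every $\alpha$. (One could equally invoke Remark~\ref{re1}: the curves $u\mapsto\psi(u,v_0)$ are the $\rmd s^2$-geodesics orthogonal to $\ell$, and since the conformal factor $u^{2\alpha}$ depends only on $u$, the reparametrized curve stays a geodesic of $\rmd\tilde s^2$; but the computational route is cleaner to write.)

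For part (2), I would take the curve $t\mapsto\psi(u_0,v(t))$ with $u(t)\equiv u_0$, so $\dot u\equiv0$, $\ddot u\equiv0$, and $\Vert\dot\gamma\Vert^2=G^2\dot v^2$. Substituting into \eqref{et3}: the left-hand side becomes $\alpha\dot v G\,G^2\dot v^2/u_0=\alpha G^3\dot v^3/u_0$, and on the right-hand side the terms containing $\dot u$ drop, leaving $-\dot v\,G^2 G_u\dot v^2 - G(0-0)= -G^2 G_u\dot v^3$ — wait, I should be careful: the surviving terms are $-\dot v(G^2 G_u\dot v^2) - G(\dot u\ddot v-\ddot u\dot v) = -G^2 G_u\dot v^3 + G\ddot u\dot v$, and $\ddot u=0$, so it is $-G^2 G_u\dot v^3$. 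Hence \eqref{et3} reduces to $\alpha G^3\dot v^3/u_0 = -G^2 G_u\dot v^3$, i.e. (dividing by $G^2\dot v^3$, which is nonzero by regularity) $\alpha G/u_0 = -G_u$, that is $\alpha G+u_0 G_u=0$. Since $G=G(u_0,v)$ and $G_u=G_u(u_0,v)$ vary with $v$ along the curve, this is the stated pointwise condition.

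For the final assertion, suppose an $\alpha$-catenary $\gamma$ is tangent at some point $p=\psi(u_0,v_0)$ to the coordinate curve $v=v_0$. The key is a uniqueness argument: by Remark~\ref{re1}, $\alpha$-catenaries are exactly the geodesics of the conformal metric $\rmd\tilde s^2=u^{2\alpha}\rmd s^2$, and through a given point with a given tangent direction there is a unique such geodesic. By part~(1), the coordinate curve $u\mapsto\psi(u,v_0)$ is itself an $\alpha$-catenary, hence a $\rmd\tilde s^2$-geodesic, passing through $p$ with tangent direction $\partial_u$. Since $\gamma$ is tangent to $v=v_0$ at $p$, its tangent direction there is also $\partial_u$ (up to sign/reparametrization), so $\gamma$ and the coordinate curve are two $\rmd\tilde s^2$-geodesics sharing a point and a direction; by uniqueness they coincide as unparametrized curves near $p$, and then globally by continuation.

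I do not anticipate a real obstacle here: everything is a substitution into \eqref{et3} plus the standard geodesic-uniqueness fact already recorded in Remark~\ref{re1}. The only point requiring a little care is bookkeeping the terms that survive when one of $\dot u,\dot v$ vanishes, and making explicit that in part~(2) the condition $\alpha G+u_0G_u=0$ is to be read as an identity in $v$ along the curve rather than at a single point. The final tangency claim is the one place where one must explicitly invoke uniqueness of geodesics for $\rmd\tilde s^2$ rather than argue purely from \eqref{et3}, since \eqref{et3} alone does not immediately preclude a catenary osculating the coordinate curve to first order without coinciding with it.
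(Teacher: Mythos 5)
Your proposal is correct and follows essentially the same route as the paper, which derives Corollary~\ref{c3} as an immediate consequence of Corollary~\ref{c2} (substituting $\dot v\equiv 0$ and $\dot u\equiv 0$ into \eqref{et3}) and justifies the final tangency claim by the geodesic identification and uniqueness of Remark~\ref{re1}. Your term-by-term bookkeeping in part (2) and the explicit appeal to uniqueness of geodesics of $\rmd\tilde s^2$ are exactly what the paper's brief proof leaves implicit.
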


As a consequence of the last statement of Corollary \ref{c3}, if we exclude the trivial case of the coordinate curves $v=v_0$, any $\alpha$-catenary is a graph over any coordinate curve $u=u_0$, such as the reference curve $\ell$. Thus, to calculate $\alpha$-catenaries on a given surface, we can apply Eq. \eqref{et3} with $u=u(v)$, which then implies we must solve
\[
\frac{\alpha G}{u}\Vert\dot{\gamma}\Vert^2=-( G_v\dot{u}+2G_u\dot{u}^2+G^2G_u)+G\ddot{u}.
\]

\begin{example}[Ruled surfaces in Euclidean space]
Let $c:I\to\mathbb{E}^3$ be a regular curve and $W$ a unit vector field along $c$. Consider the ruled surface $\varSigma^2$ parametrized by
  \begin{equation}\label{ruled}
    \psi(u,v)=c(v)+uW(v),\quad v\in I, u\in\mathbb{R}.
\end{equation}
Without loss of generality, we may assume that $\langle c',W\rangle=0$ and that $c$ is parametrized by arc length. Considering the reference curve $\ell$ to be $c(u)=\psi(0,v)$, the geodesics of $\varSigma^2$ orthogonal to $c(v)$ are the rulings $u\mapsto \psi(u,v)$. The metric of $\Sigma^2$ is $\rmd u^2+G^2\rmd v^2$, where 
$$
G(u,v)^2=1+2u\langle c'(v),W'(v)\rangle+u^2\Vert W'(v)\Vert^2.
$$
We present catenaries in explicit examples of ruled surfaces. 
\begin{enumerate}
    \item \emph{Cylindrical surfaces}. A cylindrical surface is a ruled surface with  $c(v)$ as a plane curve in $\mathbb{E}^3$   and   $W$ as a unit constant vector orthogonal to the plane containing $c(v)$.  In this case, we have  $G=1$; therefore, the $\alpha$-catenaries in cylindrical surfaces coincide with that of the Euclidean plane (see Proposition \ref{iso}). 
    \item \emph{Helicoid}. The helicoid is the ruled surface with  $c(v)=(0,0,v)$ and   $W(v)=(\cos v,\sin v,0)$. The function $G$ is   $G(u,v) =\sqrt{1+u^2}$. Here, the reference curve $\ell(v)=c(v)=\psi(0,v)$ is also a geodesic of the helicoid. Thus, the equation of the catenaries \eqref{kc5} becomes 
    $$
      u(1+u^2)\ddot{u}-[2u^2+\alpha(1+u^2)]\dot{u}^2-(1+2\alpha)u^2-(1+\alpha)u^4-\alpha=0.
    $$
    In the particular case $\alpha=1$, this equation is
    \[
     u(1+u^2) \ddot{u}-(1+3 u^2) \dot{u}^2-2 u^4-3u^2-1=0.    
    \]
    \item \emph{Binormal surface.} Let $c:I\to\mathbb{E}^3$ be a regular curve with curvature $\bar{\kappa}$ and torsion $\bar{\tau}$. If $\{T,N,B\}$ denotes the Frenet frame of $c$, the binormal surface of $c$ is the ruled surface \eqref{ruled} with base curve  $c$ and $W=B$, i.e., the rulings are the binormal lines. The function $G$ is $G=\sqrt{1+\bar{\tau}^2 u^2}$. If the torsion $\bar{\tau}\not=0$ is constant, then $\Sigma^2$ is locally isometric to a helicoid. 
    % the catenoid obtained by rotating the (Euclidean) catenary $u\mapsto(\frac{1}{\bar{\tau}}\sqrt{1+\bar{\tau}^2u^2},0,\frac{1}{\bar{\tau}}\mbox{arcsinh}(\bar{\tau}u))$ around the $z$-axis with angular velocity $\bar{\tau}$ (see Section \ref{sec3})
\end{enumerate}
\end{example}
 
 %%%%
\section{Catenaries in space forms}
\label{sec3}
 %%%%%
 
The first context where we apply the characterization of catenaries provided by Theorem \ref{t1} is that of space forms. The $2d$ space forms $M_k^2$ of curvature $k$ can be viewed as a warped product on $[0,\Lambda)\times \mathbb{S}^1$ with warped metric $\rmd s^2= \rmd u^2+s_k^2(u)\rmd \theta^2$, where $\rmd \theta^2$ is the standard metric on the unit circle $\mathbb{S}^1$, 
$$
 s_k(u) = \left\{
          \begin{array}{cl}
            \dfrac{\sin(\sqrt{k}\,u)}{\sqrt{k}},&k>0\\
            u,& k=0\\
            \dfrac{\sinh(\sqrt{-k}\,u)}{\sqrt{-k}},&k<0
          \end{array}
          \right., \quad \mbox{and} \quad
 \Lambda =\left\{
          \begin{array}{cl}
            \frac{\pi}{\sqrt{k}},&k>0\\
            \infty,& k\leq0
          \end{array}
          \right..
$$ 
The standard model for two-dimensional space forms is 
$$
 M_k^2 = \left\{
         \begin{array}{lll}
          \mbox{Sphere $\mathbb{S}^2(r)$ of radius $r$}, &  k=\frac{1}{r^2}\\
          \mbox{Euclidean plane $\mathbb{E}^2$}, & k=0\\
          \mbox{Hyperbolic plane $\mathbb{H}^2(r)$}, & k=-\frac{1}{r^2} 
         \end{array}
         \right..
$$
% $$
%  M_k^2=\left\{\begin{array}{lll}
%  \mathbb{S}^2(r), & \mbox{$2$-sphere of curvature $k=\frac{1}{r}$} & k>0\\
%  \mathbb{E}^2,&\mbox{Euclidean plane}&k=0\\
%   \mathbb{H}^2(r), & \mbox{hyperbolic plane of curvature $k=\frac{-1}{r}$} & k<0
%   \end{array}\right..
% $$

\begin{example}[The Euclidean plane] Let $\varSigma^2=\mathbb{E}^2$ be the Euclidean plane. If the metric is $\rmd s^2=\rmd u^2+\rmd v^2$, consider the geodesic $\ell$ given by the equation $u=0$. If $\gamma$ is parametrized by $\gamma(t)=(u(t),t)$, then  \eqref{et3} is 
\begin{equation}\label{acatenary}
\frac{\alpha}{u}=\frac{\ddot{u}}{1+\dot{u}^2}.
\end{equation}
For $\alpha=1$, we obtain the well-known Euclidean catenary (see Fig. \ref{fig:CylindricalCatenaries}, left):
\begin{equation}\label{rcate2}
u(t)=\frac{1}{\mu}\cosh (\mu t+\nu),\quad \mu,\nu\in\mathbb{R} \quad (\mu\not=0).
\end{equation}

It is worth mentioning that Eq. \eqref{acatenary} also appears in the theory of singular minimal surfaces. Indeed, the solutions of this equation are the generating curves of cylindrical singular minimal surfaces \cite{dierkesgroh,dierkeslopez,lopezinvariant}. Multiplying both sides of \eqref{acatenary} by $\dot{u}$ and then integrating leads to $1+\dot{u}^2=\mu u^{2\alpha}$, for some constant $\mu>0$.  The geometry of these solutions is described in \cite{dierkesgroh,dierkeslopez,lopezinvariant}.  
\end{example}
 
 \begin{example}[The sphere] 
 The curvature $\kappa$ of a spherical $\alpha$-catenary satisfies
$$
 \kappa=\frac{\alpha\cos{u}}{u\sqrt{\dot{u}^2+(\cos{u})^2}}.
$$
This equation coincides with the results in Ref. \cite{lopezcatenaryform}. We shall provide more information on spherical catenaries in Section \ref{sec4} when discussing catenaries on Euclidean surfaces of revolution.
 
 \end{example}
 
 \begin{example}[The hyperbolic plane] Let $\mathbb{H}^2(r)$ be the hyperbolic plane in the hyperboloid model, i.e., consider $\mathbb{H}^2(r)$ as the  surface of curvature $-1/r$ in the $3d$ Lorentzian space  $\mathbb{E}_1^{3}=(\mathbb{R}^{3},\langle X,Y\rangle_1=-X_0Y_0+X_1Y_1+ X_2Y_2)$ given by 
\begin{equation}
    \mathbb{H}^2(r) = \{(x,y,z)\in\mathbb{E}_1^{3}:-x^2+y^2+z^2=-r^2,\,x>0\}.
\end{equation}
Let $\ell$ be the geodesic in $\bbHr$ obtained by the intersection with the plane of equation $z=0$:
\begin{equation}
    \ell(v) = r(\cosh\frac{v}{r},\sinh\frac{v}{r},0).
\end{equation}
The geodesics orthogonal to $\ell$ at $\ell(v)$ has velocity vector $X=(0,0,1)$. Thus, we parametrize $\bbHr$ as
\begin{equation}\label{eq35}
  \psi(u,v) = \ell(v)\cosh \frac{u}{r}+\sinh \frac{u}{r}\,X= r(\cosh\frac{u}{r}\cosh\frac{v}{r},\cosh\frac{u}{r}\sinh\frac{v}{r},\sinh\frac{u}{r}).
\end{equation} 
A direct computation shows that the   induced metric takes the form
\begin{equation}
    \rmd s^2 = \rmd u^2+\cosh^2 \frac{u}{r}\,\rmd v^2.
\end{equation}
Thus,  $G(u,v)=\cosh\frac{u}{r}$. It follows that $\gamma(t)=\psi(u(t),v(t))$ is a hyperbolic $\alpha$-catenary if, and only if, $\gamma$ has curvature
\begin{equation}
    \kappa(t) = \alpha\frac{\dot{v}(t)}{u(t)\Vert\dot{\gamma}(t)\Vert}\cosh \frac{u(t)}{r} .
\end{equation}

Since  the Lagrangian $L=u^\alpha\sqrt{\dot{u}^2+G^2\dot{v}^2}$ does not depend on $v$, it follows that $\alpha$-catenaries have the first integral
\begin{equation}
    \frac{\partial L}{\partial\dot{v}} = u^{\alpha}\frac{\dot{v}\cosh^2\frac{u}{r}}{\sqrt{\dot{u}^2+\dot{v}^2\cosh^2\frac{u}{r}}}=c=\mbox{constant}.
\end{equation}
Therefore, we can integrate the equation for the $\alpha$-catenary by quadrature. Assuming $\gamma$ is a graph over $\ell$, i.e., $u=u(v)$ over an interval $[a,b]$, we have 
\begin{equation}
     c\,\frac{\rmd u}{\rmd v} = \pm\cosh\frac{u}{r}\sqrt{u^{2\alpha}\cosh^2\frac{u}{r}-c^2}\,,
\end{equation}
from which we can find $u$ as a function of $v$ by inverting 
\begin{equation}
    \frac{v}{c} = \pm\int_{u(a)}^{u(b)}\frac{\rmd u}{\cosh\frac{u}{r}\sqrt{u^{2\alpha}\cosh^2\frac{u}{r}-c^2}}.
\end{equation}
 \end{example}
%%%%%%%%%%%%%%%%%%%%%%%%%%%%%%%%%%%

\section{Catenaries on surfaces of revolution in Euclidean space}
\label{sec4}

In this section, we calculate the catenaries on Euclidean surfaces of revolution endowed with the metric induced by $\mathbb{E}^3$. Without loss of generality, we can assume that the rotation axis is the $z$-axis. If the generating curve is  $c(u)=(a(u),0,b(u))$, $a>0$, $u\in I$, $0\in I$, where $u$ is the arc-length parameter, then  $\varSigma^2$ is parametrized by 
\begin{equation}\label{ParametrizationSurfRevolution}
\psi(u,v)=(a(u)\cos{v},a(u)\sin{v},b(u)),\quad u\in I, v\in\mathbb{R}.
\end{equation} 
Then, the metric of $\varSigma^2$ is $\rmd u^2+a(u)^2\rmd v^2$. We shall take as the reference curve $\ell$ the parallel of equation $u=0$, which is a geodesic if, and only if, $a'(0)=0$.  

Applying Corollary \ref{c2}, a curve $\gamma(t)=\psi(u(t),v(t))$ is an $\alpha$-catenary if, and only if, it satisfies  
\[
\alpha a \dot{v} (\dot{u}^2+a^2\dot{v}^2) = -u\left[\dot{v}( 2a'\dot{u}^2+a^2a'\dot{v}^2)+a(\dot{u}\ddot{v}-\ddot{u}\dot{v})\right].
\]
In addition, from Corollary \ref{c3}, the meridians of the surface are catenaries. As discussed after Corollary \ref{c3},  if $\gamma$ is not a meridian, we may write $u=u(v)$.  Then, $\gamma$ is an $\alpha$-catenary  if, and only if, 
\begin{equation}\label{kc5}
\alpha a  (\dot{u}^2+a^2)=u(a\ddot{u}-2a'\dot{u}^2-a^2a').
\end{equation}

As mentioned in Remark \ref{re1},   $\alpha$-catenaries can be seen as geodesics of the conformal metric $\rmd\tilde{s}^2=u^{2\alpha}(\rmd u^2+G^2\rmd v^2)$. Since the  metric $\rmd\tilde{s}^2$ is invariant, it is expected that $\alpha$-catenaries will obey a Clairaut relation \cite{MO11}. Indeed,

\begin{thm}[Clairaut relation for $\alpha$-catenaries]\label{tclairaut}
Let $\gamma$ be an $\alpha$-catenary on a surface of revolution $\Sigma^2$ parametrized by \eqref{ParametrizationSurfRevolution} and which is not a meridian. If $\theta$ denotes the angle between $\gamma$ and the parallels of $\Sigma^2$, then there exists a constant $c\in\mathbb{R}$ such that
\begin{equation}\label{clairautRelation}
    a(u)u^{\alpha}\cos\theta = c.
\end{equation}
We shall refer to $\rho(u)=a(u)u^{\alpha}$ as the \emph{Clairaut radius} of $\Sigma^2$.
\end{thm}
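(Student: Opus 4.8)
The plan is to exploit Remark \ref{re1}: an $\alpha$-catenary is a geodesic of the conformal metric $\rmd\tilde s^2 = u^{2\alpha}\rmd s^2 = u^{2\alpha}(\rmd u^2 + a(u)^2\rmd v^2)$. Since the original surface is a surface of revolution, the coordinate $v$ is cyclic for $\rmd\tilde s^2$ as well: the rewritten metric $\rmd\tilde s^2 = u^{2\alpha}\rmd u^2 + (u^{\alpha}a(u))^2\rmd v^2$ has no explicit $v$-dependence, so $\partial_v$ is a Killing field of $(\Sigma^2,\rmd\tilde s^2)$. The standard Clairaut theorem for metrics of the form $E(u)\rmd u^2 + \rho(u)^2\rmd v^2$ then says that along any $\tilde s$-geodesic the quantity $\rho(u)\cos\tilde\theta$ is constant, where $\tilde\theta$ is the angle (measured in $\rmd\tilde s^2$) between the geodesic and the parallels $u=\text{const}$. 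Setting $\rho(u)=u^{\alpha}a(u)$ gives \eqref{clairautRelation}, provided one checks that the angle with the parallels is the same whether measured in $\rmd s^2$ or in the conformal metric $\rmd\tilde s^2$.

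Concretely, I would proceed as follows. First, parametrize $\gamma$ by its $\rmd\tilde s^2$-arc length, write its unit tangent in the $(u,v)$ frame, and compute the first integral coming from the Killing field $\partial_v$: for the geodesic equations of $\rmd\tilde s^2$ one has $\frac{\rmd}{\rmd s}\big(\tilde g_{vv}\,\dot v\big)=0$, i.e. $u^{2\alpha}a(u)^2\dot v$ is constant along $\gamma$ (dots now being $\rmd/\rmd\tilde s$). Second, express this conserved quantity in terms of the angle $\theta$ that $\gamma$ makes with the parallels. Because $\rmd\tilde s^2$ is conformal to $\rmd s^2$, angles are preserved, so $\theta$ computed with $\rmd s^2$ equals the angle computed with $\rmd\tilde s^2$; and in the $\rmd\tilde s^2$-orthonormal frame $\{u^{-\alpha}\partial_u,\ u^{-\alpha}a(u)^{-1}\partial_v\}$ the component of the unit tangent along the parallel direction is $\cos\theta$, which forces $u^{\alpha}a(u)\,\dot v = \cos\theta$ (up to sign). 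Multiplying the conserved quantity $u^{2\alpha}a(u)^2\dot v$ by $(u^{\alpha}a(u))^{-1}$ then shows $u^{\alpha}a(u)\cos\theta$ is constant, which is \eqref{clairautRelation}. Alternatively, one can avoid invoking the abstract Clairaut theorem and instead plug $\cos\theta = \dfrac{a(u)\dot v}{\sqrt{\dot u^2 + a(u)^2\dot v^2}}$ (the angle expression in the original metric) directly into the first integral $\dfrac{\partial L}{\partial\dot v}=u^{\alpha}\dfrac{a(u)^2\dot v}{\sqrt{\dot u^2+a(u)^2\dot v^2}} = c$ already available because the Lagrangian $L=u^{\alpha}\sqrt{\dot u^2+a^2\dot v^2}$ from \eqref{energy} is independent of $v$; then $c = u^{\alpha}a(u)\cos\theta$ immediately.

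The main obstacle is the bookkeeping around the angle $\theta$: one must be careful that "the angle between $\gamma$ and the parallels" is well defined and conformally invariant, and that the $\cos\theta$ appearing is the one with $a(u)\dot v$ in the numerator rather than $\dot u$ (i.e. that we are measuring against the parallel direction $\partial_v$, not the meridian direction $\partial_u$), and that the excluded meridian case is precisely where $\dot v\equiv 0$ and the relation degenerates to $c=0$. Once the angle convention is pinned down, the argument is essentially the direct substitution in the last display above, so I would likely present that short route and merely remark on the conformal-geodesic interpretation as motivation.
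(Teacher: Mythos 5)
Your proposal is correct and, in the ``short route'' you say you would actually present, it coincides with the paper's own proof: the first integral $\partial L/\partial\dot v = u^{\alpha}a^2\dot v/\Vert\dot\gamma\Vert = c$ coming from the $v$-independence of $L=u^{\alpha}\sqrt{\dot u^2+a^2\dot v^2}$, combined with $\cos\theta = a\dot v/\Vert\dot\gamma\Vert$, immediately gives \eqref{clairautRelation}. The conformal-geodesic/Killing-field interpretation you mention is exactly the motivation the paper states just before the theorem, so nothing further is needed.
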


\begin{proof}
Since the Lagrangian $L(u,v,\dot{u},\dot{v})=u^{\alpha}\sqrt{\dot{u}^2+a^2\dot{v}^2}$ associated with an $\alpha$-catenary on a surface of revolution does not depend on $v$, $\partial L/\partial\dot{v}$ is a first integral. In other words, there exists a constant $c\in\mathbb{R}$ such that
\begin{equation}\label{inte1}
   \frac{\partial L}{\partial\dot{v}} = \frac{u^{\alpha}a^2\dot{v}}{\Vert\dot{\gamma}\Vert}=c.
\end{equation}

On the other hand, the parallels of $\varSigma^2$ are the coordinate curves $v\mapsto\beta_u(v)=\psi(u,v)$. Thus, the angle $\theta$ between $\gamma$ and the parallels satisfies 
$\langle\beta_u',\dot{\gamma}\rangle=\Vert \beta_u'\Vert  \Vert \dot{\gamma}\Vert \cos\theta$. Since 
$\langle \beta_u',\dot{\gamma}\rangle=\langle\psi_v,\dot{\gamma}\rangle=\dot{v}a^2$ and $\Vert \beta_u'\Vert =a$, we have 
$\dot{v}a=\Vert \dot{\gamma}\Vert \cos\theta$.   Now, using Eq. \eqref{inte1}, we finally obtain
\[
 a u^{\alpha} \cos\theta = \frac{u^{\alpha}a^2\dot{v}}{\Vert\dot{\gamma}\Vert} = c.
\]\end{proof}

A parallel of a surface of revolution is a geodesic if, and only if, $a'=0$. A similar characterization applies to $\alpha$-catenaries if we replace $a$ with the Clairaut radius $\rho$. Indeed, by Corollary \ref{c3}, a parallel $u=u_0$ is an $\alpha$-catenary if, and only if, $\alpha a+u_0a'=0$, which is equivalent to $\rho'=0$. Therefore, a parallel $u=u_0$ is an $\alpha$-catenary if, and only if, $\rho'(u_0)=0$. 

\begin{cor}
Let $\gamma(v)=\psi(u(v),v)$ be an $\alpha$-catenary on a surface of revolution $\varSigma^2$ parametrized by \eqref{ParametrizationSurfRevolution}. Then, there is a constant $c\in\mathbb{R}$ such that
\begin{equation}\label{SolutionByQuadratureSurfRev}
 v-v_0 = \pm\,\int_{u_0}^u \frac{\rmd t}{\sqrt{\frac{t^{2\alpha}a^2(t)}{c^2}-1}}.
\end{equation}
\end{cor}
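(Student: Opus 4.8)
The plan is to derive the quadrature formula \eqref{SolutionByQuadratureSurfRev} directly from the Clairaut relation of Theorem \ref{tclairaut} together with the unit-speed normalization implicit in writing $\gamma$ as a graph $u = u(v)$. First I would observe that, for a curve written in the form $\gamma(v) = \psi(u(v), v)$, one has $\dot\gamma = u'\,\psi_u + \psi_v$, so that $\Vert\dot\gamma\Vert^2 = (u')^2 + a^2(u)$, where $u' = \rmd u/\rmd v$. This is just the reparametrization of the general velocity expression with $\dot v = 1$; since any non-meridian $\alpha$-catenary is a graph over $\ell$ by the discussion following Corollary \ref{c3}, this is legitimate.

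Next I would feed this into the first integral \eqref{inte1}. With $\dot v = 1$ the relation $\partial L/\partial\dot v = c$ becomes
\[
\frac{u^{\alpha} a^2}{\sqrt{(u')^2 + a^2}} = c,
\]
and solving for $u'$ gives $(u')^2 = a^2\bigl(\tfrac{u^{2\alpha} a^2}{c^2} - 1\bigr)$, hence
\[
\frac{\rmd u}{\rmd v} = \pm\, a(u)\sqrt{\frac{u^{2\alpha} a^2(u)}{c^2} - 1}.
\]
Wait — comparing with the claimed formula, I see the integrand in \eqref{SolutionByQuadratureSurfRev} has no factor of $a(t)$ in front of the square root, so I should instead solve the Clairaut relation \eqref{clairautRelation} in the form $a u^\alpha \cos\theta = c$ using $\cos\theta = a\dot v/\Vert\dot\gamma\Vert = a/\sqrt{(u')^2+a^2}$ — which is the same computation. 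Let me recheck: $\cos\theta = a/\sqrt{(u')^2+a^2}$ gives $a u^\alpha \cdot a/\sqrt{(u')^2+a^2} = c$, i.e. $u^\alpha a^2 = c\sqrt{(u')^2+a^2}$, so $(u')^2 = \tfrac{u^{2\alpha}a^4}{c^2} - a^2 = a^2\bigl(\tfrac{u^{2\alpha}a^2}{c^2}-1\bigr)$. The factor $a$ genuinely appears; presumably the paper intends a constant rescaled or the parametrization normalized so that the $a$ is absorbed. I would therefore present the computation honestly: separate variables to get $\rmd v = \pm\,\rmd u / \bigl(a(u)\sqrt{u^{2\alpha}a^2(u)/c^2 - 1}\bigr)$ and integrate from $u_0$ (at $v_0$) to $u$. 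If the paper's stated integrand is meant literally, the resolution is that one should use the constant $c$ from \eqref{inte1} divided by an appropriate power — but most cleanly, I would reproduce the separation of variables and integrate, matching whichever normalization the authors fixed.

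The one genuine step here is the algebraic inversion $\Vert\dot\gamma\Vert = \sqrt{(u')^2 + a^2}$ combined with squaring the first integral and solving for $u'$; everything else is a direct substitution. The only subtlety — and the place I would be most careful — is the sign ambiguity: squaring introduces $\pm$, which corresponds to the two branches where $u$ is increasing or decreasing along the parallel direction, and this is exactly the $\pm$ recorded in \eqref{SolutionByQuadratureSurfRev}. I would also note the implicit domain restriction $u^{2\alpha}a^2(u) \ge c^2$ needed for the square root to be real, which geometrically says the catenary cannot cross the parallel where $\rho(u) = |c|$ (its turning parallel), consistent with the Clairaut picture. So the corollary is essentially a one-line consequence of Theorem \ref{tclairaut}: solve the Clairaut relation for $\rmd u/\rmd v$ and integrate.
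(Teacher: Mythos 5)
Your proposal is correct and follows exactly the paper's own route: write $u=u(v)$, use the first integral \eqref{inte1} (equivalently the Clairaut relation \eqref{clairautRelation} with $\cos\theta=a/\sqrt{(u')^2+a^2}$), solve for $\rmd u/\rmd v=\pm\,a\sqrt{u^{2\alpha}a^2/c^2-1}$, and separate variables. You are also right about the discrepancy you flagged: the paper's own proof produces precisely this $\rmd u/\rmd v$, so the integrand should carry the factor $1/a(t)$, and indeed the catenoid example later in Section~\ref{sec4} uses $\pm\int c\,\rmd t/\bigl(\sqrt{1+t^2}\sqrt{t^2(1+t^2)-c^2}\bigr)$, i.e.\ the corrected form; the $a$-free integrand as printed in \eqref{SolutionByQuadratureSurfRev} is what one obtains only after the conformal change of variable $\rmd z=\rmd u/a(u)$ of Eq.~\eqref{SolByQuadratConformalCoordSurfRev}. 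So the missing $a(t)$ is a typo in the statement, not a gap in your argument.
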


\begin{proof}
Assuming that $u=u(v)$ in Eq. \eqref{inte1}, it follows that 
$$ \frac{u^{\alpha}a^2}{\sqrt{\dot{u}^2+a^2}}=c,$$
or, equivalently, 
$$ \dot{u} = \pm\, a\,\sqrt{\frac{u^{2\alpha}a^2}{c^2}-1}.$$
Now, Eq. \eqref{SolutionByQuadratureSurfRev} follows directly.  
\end{proof}

\begin{rem}
Obtaining $u(v)$ from Eq. \eqref{kc5} by quadrature is also possible. In fact, doing $w(u)=\dot{u}(v)$, and using that $ww'=\ddot{u}$, then Eq. \eqref{kc5} writes as
\[
 \alpha a (w^2+a^2) = ua ww'-2ua'w^2-ua'a^2. 
\]
This expression implies 
\[
 \left(\frac{w^2}{2}\right)' =2\left(\frac{\alpha}{u}+\frac{2a'}{a}\right)\frac{w^2}{2} + a^2\left(\frac{\alpha}{u}+\frac{a'}{a}\right),
\]
which is an equation of type $y'=A(u)y+B(u)$, $y=y(u)$, and whose general solution is $y=\mathrm{e}^{\int A}\int B \mathrm{e}^{-\int A}$. In the present case, 
the solution is 
$$\frac{w^2}{2} =\frac{a^2}{2}\left(c_1u^{2\alpha}a^2-1\right),$$
for some integration constant $c_1$. Extracting the square root gives the desired result for $\dot{u}$ with $c_1=c^{-2}$. 
\end{rem}

\begin{figure}[t]
    \centering
    \includegraphics[width=0.4\linewidth]{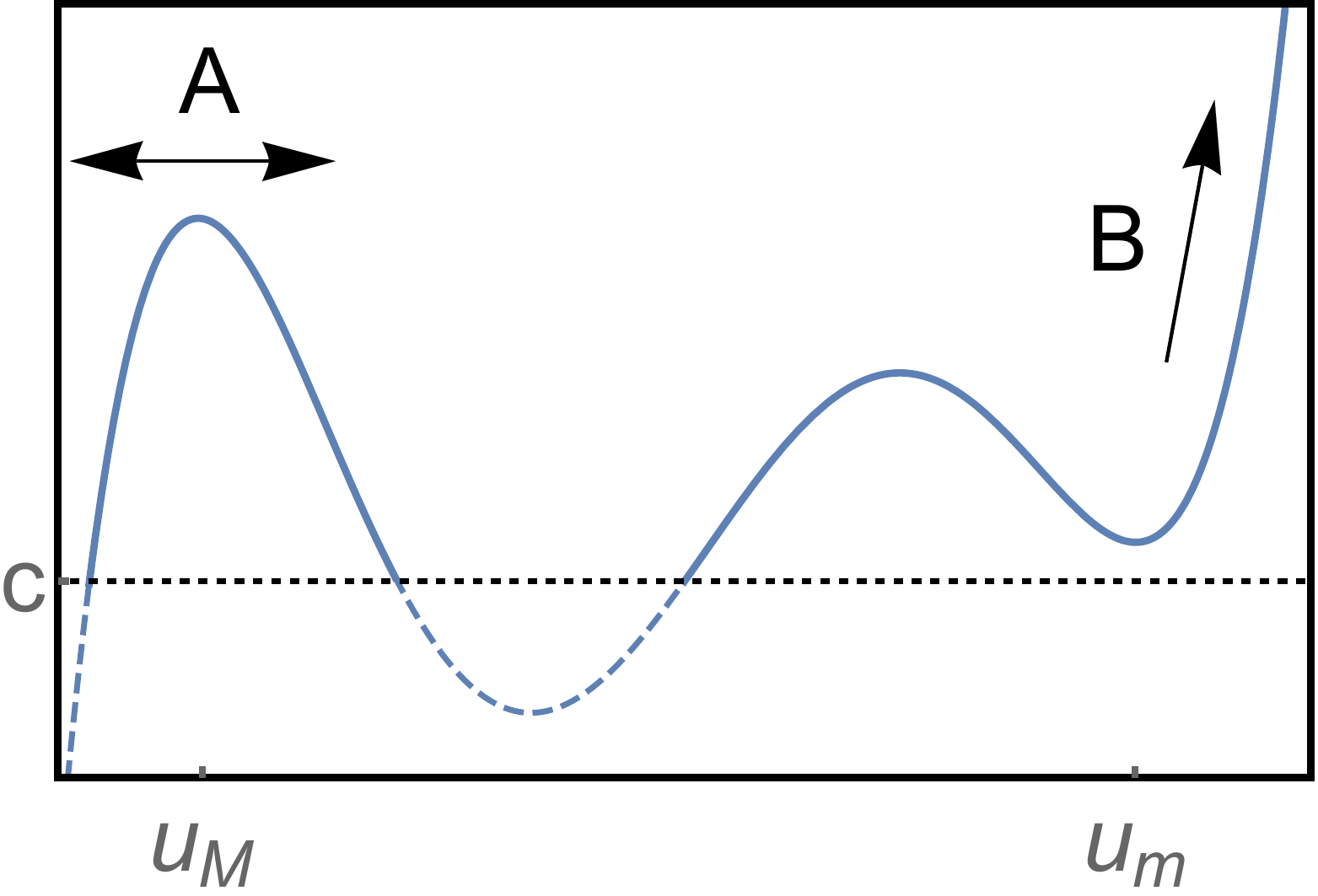}
    \caption{The Clairaut radius $\rho$ of $\alpha$-catenaries on a surface of revolution, Eq. \eqref{clairautRelation}. Once $c$ is fixed, the region $\rho(u)\leq c$ is inaccessible (dashed blue curve). A maximum of $\rho$ is expected to be stable. In the figure, the $\alpha$-catenaries close to the critical parallel $u=u_M$ will be trapped in the region $A$. On the other hand, a minimum of $\rho$ is expected to be unstable. In the figure, the $\alpha$-catenaries close to the critical parallel $u=u_m$ may escape through the region $B$ and never return.}
    \label{fig:GraphClairautRadius}
\end{figure}

Once we fix the constant $c$ in the Clairaut relation, the coordinate $u$ of an $\alpha$-catenary only assumes those values belonging to the set $\{u:\rho(u)\geq c\}$. Then, for $c$ slightly below a maximum of the Clairaut radius $\rho$, the $\alpha$-catenaries are trapped between the two parallels corresponding to $\rho(u)=c$. This situation is depicted by the maximum $u_M$ in Figure \ref{fig:GraphClairautRadius}. In this sense, we may refer to a critical parallel corresponding to a maximum of $\rho$ as a \emph{stable $\alpha$-catenary}. On the other hand, if $u^*$ is a minimum of $\rho$, then an $\alpha$-catenary initially close to the parallel $u=u^*$ may never return. This situation is depicted by the minimum $u_m$ in Figure \ref{fig:GraphClairautRadius}.
 
If we forget for a moment the function multiplying the square root in Eq. \eqref{SolutionByQuadratureSurfRev}, then we could establish a mechanical analogy and see the catenary equation as the equation of motion of a particle of mass $m=2$ subject to a potential $V=-\frac{1}{c^2}\rho^2$ and whose total energy is $E=-1$. Since $V'(u^*)=-\frac{2}{c^2}\rho(u^*)\rho'(u^*)=0$ and $V''(u^*)=-\frac{2}{c^2}\rho(u^*)\rho''(u^*)$, we could explain the intuition obtained from Figure \ref{fig:GraphClairautRadius}. It turns out that such a mechanical analogy can be made precise after we employ a suitable change of coordinates. Indeed, define $u=f(z)$, where $f$ is a solution of the first-order differential equation $y'=a(y)$. Then, we have $\dot{u}=\dot{z}f'(z)$ and, from Eq. \eqref{SolutionByQuadratureSurfRev}, it follows that
\begin{equation}\label{SolByQuadratConformalCoordSurfRev}
    \dot{z} = \frac{\dot{u}}{f'(z)} = \pm\,\sqrt{\frac{\bar{\rho}(z)^2}{c^2}-1},
\end{equation}
where $\bar{\rho}(z)=f(z)^{\alpha}a(f(z))$. Geometrically, the new coordinate system $(z,v)$ is conformally flat:
\[
 \rmd s^2 = \rmd u^2+a(u)^2\rmd v^2 = f'(z)^2\rmd z^2 + a(f(z))^2\rmd v^2 = a(f(z))^2(\rmd z^2 + \rmd v^2).
\]
Note that, in the conformal coordinate system $(z,v)$, the catenary equation \eqref{kc5} becomes
\[
 \bar{\rho}\ddot{z} = \bar{\rho}'\dot{z}^2 + \bar{\rho}'.
\]
From this equation, we can obtain Eq. \eqref{SolByQuadratConformalCoordSurfRev}.

In addition, the conformal coordinate system $(z,v)$ allows us to analyze the stability of $\alpha$-catenaries. Taylor expanding $V$ around a value $z^*$ such that $V'(z^*)=0$, the linearization of the equation of motion $-\ddot{z}=V'(z)$ becomes
\[
 \Delta\ddot{z} = -\lambda\,\Delta z,\quad \Delta z = z-z^*,\quad \lambda = \frac{V''(z^*)}{c^2}.
\]
If $\lambda>0$, the solution is $\Delta z = z_1\cos(\sqrt{\lambda}\,v) + z_2\sin(\sqrt{\lambda}\,v)$ and, consequently, the parallel associated with $z=z^*$ must be (linearly) stable. On the other hand, if $\lambda<0$, the solution is $\Delta z = z_1\cosh(\sqrt{-\lambda}\,v) + z_2\sinh(\sqrt{-\lambda}\,v)$ and, consequently, the parallel associated with $z=z^*$ is not (linearly) stable.
 
\begin{figure}[hbtp]
    \centering
    \includegraphics[width=0.98\linewidth]{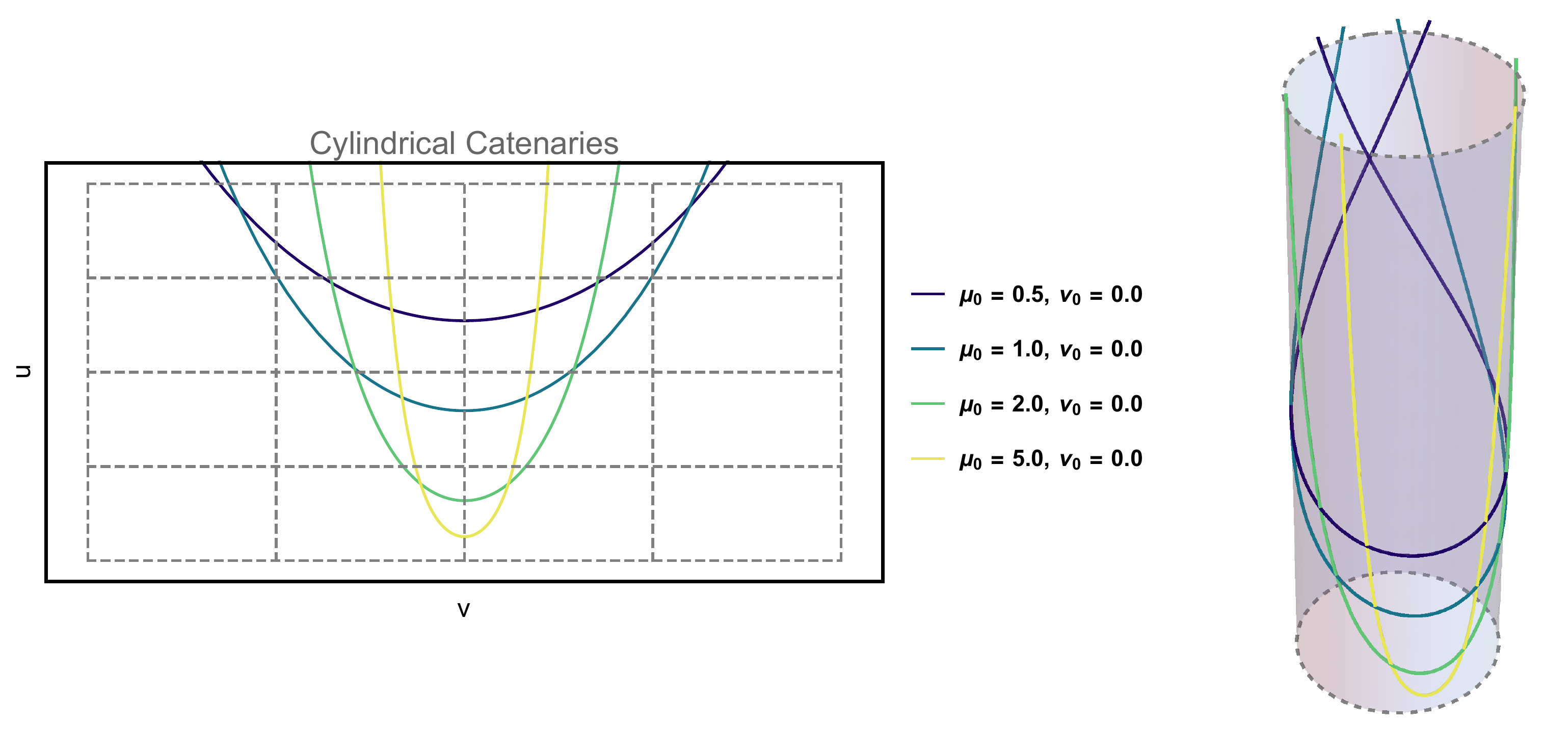}
    \caption{The catenaries $\gamma(v)=(\cos v,\sin v,\frac{1}{\mu_0}\cosh(\mu_0v+\nu_0))$ on a cylinder. (Left) Catenaries on the coordinate $(u,v)$-plane. The dashed gray lines represent the geodesics given by the coordinate curves, while the full curves represent catenaries. (Right) Three-dimensional view of the catenaries on a cylinder.}
    \label{fig:CylindricalCatenaries}
\end{figure}

Now, we provide  examples of catenaries on explicit surfaces of revolution.

%%%%%%%%%%%%%%%%%%%%%%%%%%
\subsection{Circular cylinder}
%%%%%%%%%%%%%%%%%%%%%%%%%%

If $\varSigma^2$ is a circular cylinder of radius $1$ about the $z$-axis, then the generating curve is $c(u)=(1,0,u)$.   Thus, $a(u)=1$ and  no parallel of a cylinder is an $\alpha$-catenary. Since $G=1$, the $\alpha$-catenaries coincide with that of the Euclidean plane.  See Fig. \ref{fig:CylindricalCatenaries}, right.  Note that the Clairaut constant is $c=1/\sqrt{\mu}$.

%%%%%%%%%%%%%%%%%%%%%%%%%%
\subsection{Unit sphere $\mathbb{S}^2$}
%%%%%%%%%%%%%%%%%%%%%%%%

The generating curve of $\mathbb{S}^2$ is $c(u)= (\cos{u},0,\sin{u})$. In this case, $\ell(v)=\psi(0,v)$ is the equator of equation $z=0$. Here, $a(u)=\cos{u}$ and, therefore, Eq. \eqref{k-c} of a spherical $\alpha$-catenaries becomes 
$$
 \kappa=\frac{\alpha\cos{u}}{u\sqrt{\dot{u}^2+(\cos{u})^2}}.
$$
This equation coincides with that in Ref. \cite{lopezcatenaryform}.

Applying the Clairaut relation to the spherical catenaries ($\alpha=1$) gives
\[
 u\cos u\cos\theta = c.
\]
If $c=0$, then the catenary is a meridian of the sphere. Otherwise, if $c\not=0$, a catenary is never tangent to a meridian, and consequently, a catenary that is not a meridian can not pass through the north pole. Since the Clairaut radius $\rho=u\cos u$ of a spherical catenary has a single maximum, the intuitive picture of the spherical catenaries is that of curves oscillating around the single critical parallel. See Figure \ref{fig:GraphClairautRadius}, region A. More precisely, we have the following characterization of the qualitative behavior of spherical catenaries.

\begin{prop}
Let $\gamma(v)=\psi(u(v),v)$ be a catenary in $\mathbb{S}^2$ that is not a meridian. Then, $\gamma$ satisfies the following properties:
\begin{enumerate}
    \item there exists a unique parallel $u=u^*$ which is also a spherical catenary, where $u^*\approx 0.86$ is the unique root of $\cos u=u \sin u$ in $(0,\frac{\pi}{2})$;
    \item $\gamma$ is not asymptotic to a parallel which is not the critical parallel $u=u^*$;
    \item every catenary which is not a parallel lies in the region between two parallels $u=u_m>0$ and $u=u_M<\frac{\pi}{2}$ such that $u_m< u^* < u_M$, and
    \item all maxima of $u$ are equal to $u_M$ and all minima of $u$ are equal to $u_m$.
\end{enumerate}
\end{prop}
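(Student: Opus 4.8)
The plan is to reduce all four assertions to the behaviour of the Clairaut radius $\rho(u)=u\cos u$ on $\bigl(0,\tfrac\pi2\bigr)$, together with the first integral \eqref{inte1}, which for $\mathbb{S}^2$ (i.e.\ $a(u)=\cos u$, $\alpha=1$) yields, via \eqref{SolutionByQuadratureSurfRev}, the relation $\dot u^2=\cos^2u\,\bigl(\rho(u)^2/c^2-1\bigr)=:\Phi(u)$, where $c$ is the Clairaut constant of $\gamma$. First I would record the one‑variable facts behind the statement: $\rho'(u)=\cos u-u\sin u$ satisfies $\rho'(0)=1$, $\rho'(\tfrac\pi2)=-\tfrac\pi2$, and $\rho''(u)=-2\sin u-u\cos u<0$ on $\bigl(0,\tfrac\pi2\bigr)$, so $\rho'$ is strictly decreasing with a single zero $u^*$ (numerically $u^*\approx0.86$); hence $\rho$ is strictly increasing on $(0,u^*)$, strictly decreasing on $(u^*,\tfrac\pi2)$, and $\rho(0^+)=\rho(\tfrac\pi2^-)=0$. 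Since by Corollary~\ref{c3} a parallel $u=u_0$ of $\mathbb{S}^2$ is a catenary exactly when $\alpha G+u_0G_u=\cos u_0-u_0\sin u_0=\rho'(u_0)=0$, this already gives (1).

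For (3), I would first observe that $c\neq0$: by the Clairaut relation \eqref{clairautRelation}, $c=0$ forces $\cos\theta\equiv0$ (as $\rho>0$ on $\bigl(0,\tfrac\pi2\bigr)$), so $\gamma$ is tangent to a meridian and hence, by the last statement of Corollary~\ref{c3}, equals that meridian — excluded by hypothesis. Now $\dot u^2=\Phi(u)\ge0$ forces $\rho(u(v))\ge|c|$ for all $v$, so $u$ takes values only in $K:=\{u\in(0,\tfrac\pi2):\rho(u)\ge|c|\}$. By the shape of $\rho$ found above, $K$ is empty if $|c|>\rho(u^*)$ (no catenary), equals $\{u^*\}$ if $|c|=\rho(u^*)$ (so $\gamma$ is the parallel $u=u^*$), and otherwise is a compact interval $[u_m,u_M]$ with $\rho(u_m)=\rho(u_M)=|c|$ and $0<u_m<u^*<u_M<\tfrac\pi2$. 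Thus a catenary that is not a parallel has $0<|c|<\rho(u^*)$ and lies in the closed band between the parallels $u=u_m$ and $u=u_M$, which is (3).

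For (4), I would substitute $\dot u=0$ into the catenary equation \eqref{kc5}: at any local extremum $v_0$ of $u$ one obtains $\ddot u(v_0)=a(u(v_0))\,\rho'(u(v_0))/u(v_0)$ (using $\rho'=a+ua'$ for $\alpha=1$), while $\dot u^2=\Phi(u)$ forces $\Phi(u(v_0))=0$, hence $u(v_0)\in\{u_m,u_M\}$. Since $a>0$ and $u>0$ on $\bigl(0,\tfrac\pi2\bigr)$, $\ddot u(v_0)$ has the sign of $\rho'(u(v_0))$: it is positive at $u=u_m$ (since $u_m<u^*$) and negative at $u=u_M$ (since $u_M>u^*$). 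Therefore every local minimum of $u$ equals $u_m$ and every local maximum equals $u_M$, proving (4); moreover $\int_{u_m}^{u_M}du/\sqrt{\Phi(u)}<\infty$ (simple zeros of $\Phi$ at the endpoints) shows that $u$ genuinely oscillates, attaining both values.

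Finally, for (2), suppose $\gamma$ were asymptotic to a parallel $u=u_1\in\bigl(0,\tfrac\pi2\bigr)$ with $u_1\neq u^*$, i.e.\ $u(v)\to u_1$ along some end of the parameter interval. Since $\dot u^2=\Phi(u)\to\Phi(u_1)$ and $u$ converges, we must have $\Phi(u_1)=0$, so $\rho(u_1)=|c|$ and thus $u_1\in\{u_m,u_M\}$; because $u_1\neq u^*$ we have $\rho'(u_1)\neq0$, so $u_1$ is a \emph{simple} zero of $\Phi$ with $\Phi'(u_1)\neq0$. Working on the side of $u_1$ where $\Phi\ge0$, the relation $\dfrac{d}{dv}\sqrt{|u-u_1|}=\dfrac{\dot u}{2\sqrt{|u-u_1|}}$ tends to $\pm\tfrac12\sqrt{|\Phi'(u_1)|}\neq0$, so $\sqrt{|u-u_1|}$ is asymptotically affine in $v$ with nonzero slope; hence $u$ cannot approach $u_1$ without reaching it at a finite parameter value, where $\dot u=0$ and, by \eqref{kc5}, $\ddot u\neq0$, so $\gamma$ turns back — a contradiction. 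Thus $\gamma$ is asymptotic to no parallel other than possibly $u=u^*$, which is (2). The step I expect to be the main obstacle is precisely this turning‑point analysis: because $\Phi$ vanishes only to first order at $u_m$ and $u_M$, one must argue carefully that these values are reached in finite parameter time, so that a non‑parallel catenary bounces between the two boundary parallels rather than spiralling onto one of them; one must also dispose of the resonant case $|c|=\rho(u^*)$, where $\Phi$ acquires a double zero at $u^*$ with $\Phi''(u^*)<0$, forcing $\Phi\le0$ near $u^*$ and hence $\gamma\equiv\{u=u^*\}$. Everything else is bookkeeping with the monotonicity of $\rho$.
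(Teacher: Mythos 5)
Your proof is correct, and for parts (1), (3) and (4) it follows essentially the paper's route: monotonicity of the Clairaut radius $\rho(u)=u\cos u$ (via $\rho''<0$, which the paper leaves implicit), the Clairaut relation \eqref{clairautRelation} with $c\neq 0$ to confine $u$ to the band $\{\rho\ge |c|\}=[u_m,u_M]$, and the observation that $\dot u$ can vanish only where $\rho(u)=|c|$; your extra step of computing $\ddot u=a\rho'/u$ from \eqref{kc5} at critical points to sort minima from maxima is a welcome sharpening of the paper's terser conclusion of (4), as is your explicit treatment of the borderline case $|c|=\rho(u^*)$. Where you genuinely diverge is part (2): the paper argues geometrically, viewing catenaries as geodesics of the conformal metric $u^2\,\rmd s^2$ and showing that a parallel which is an asymptotic limit of such geodesics would itself have to be a length-minimizing geodesic for that metric, contradicting (1); you instead run a phase-plane argument on the first integral $\dot u^2=\Phi(u)$ from \eqref{SolutionByQuadratureSurfRev}, showing that a simple zero of $\Phi$ (which any candidate limit parallel $u_1\neq u^*$ must be) is reached in finite parameter time with $\ddot u\neq 0$, so the curve bounces rather than spirals. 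Your route is more elementary and quantitative — it only uses the ODE and yields the finite-time turning behavior needed for (4) as a by-product — while the paper's argument is softer and would transfer to settings without an explicit first integral; the only point you should make fully explicit is that near the putative limit $\dot u$ has eventually constant sign (it vanishes only at $u\in\{u_m,u_M\}$, i.e. only if the parallel is actually attained), which is what legitimizes treating $\sqrt{|u-u_1|}$ as asymptotically affine.
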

\begin{proof}
The function $\rho=u\cos u$ has a single maximum at $u=u^*$: $\rho'=\cos u-u\sin u$. This proves (1)

To prove (2), we note that if $\gamma$ were asymptotic to a parallel $P_0$ with $u\not=u^*$, then it would be possible to construct a sequence of length minimizing geodesics connecting two sufficiently close points of $P_0$ (length minimizing with respect to $\rmd\tilde{s}^2=u^2\rmd s^2$). Then, $P_0$ would be itself a length minimizing geodesic with respect to $\rmd\tilde{s}^2$, which would contradict (1). 

To prove (3) and (4), we proceed as follows. First, since $\gamma$ is not a meridian, it has a non-zero Clairaut constant $c\not=0$. Consequently, $u$ can not be too close to neither $u=0$ nor to $u=\frac{\pi}{2}$. In other words, there must exist $u_m>0$ and $u_M<\frac{\pi}{2}$ such that $u\in[u_m,u_M]$. Now, let $v_c$ be a zero of $\dot{u}(v)$. Then, $u(v_c)\cos u(v_c)=c$. The Clairaut radius $\rho(u)=u\cos u$, $u\in(0,\frac{\pi}{2})$, has a single maximum at $u^*$. Since $\gamma$ is not a parallel, then $c\not=u^*\cos u^*$ and, consequently, there exist precisely two values of $u$ such that $u\cos u=c$. These values are $u_m$ and $u_M$ and they must satisfy the inequality $u_m<u^*<u_M$. Finally, we also conclude that there cannot exist other minima (maxima) of $u(v)$ except $u_m$ ($u_M$, respectively).
\end{proof}

%%%%%%%%%%%%%%%%%%%%%%%%%%
\subsection{Circular Cone}
%%%%%%%%%%%%%%%%%%%%%%%%%%

Let $\varSigma^2$ be the cone of revolution generated by the straight-line $c(u)=(u\sqrt{2}/2,0,u\sqrt{2}/2)$. (Note that no parallel of a cone is a geodesic.) Here, $a(u)=u/\sqrt{2}$ and $\rho = \frac{1}{\sqrt{2}}u^{\alpha+1}$. Thus, no parallel of a cone is an $\alpha$-catenary. The catenary equation \eqref{kc5} becomes 
$$
\alpha\left(\dot{u}^2+\frac{u^2}{2}\right)=u\ddot{u}-2\dot{u}^2-\frac{u^2}{2}.
$$
If $\alpha=1$, then $u\ddot{u}=3\dot{u}^2+u^2$. The solution of this equation is 
$$u(v)=\frac{\mu}{\sqrt{\cos(\sqrt{2}\,v+\nu)}},\quad \mu,\nu\in\mathbb{R}.$$

%%%%%%%%%%%%%%%%%%%%%%%%%%
\subsection{Catenoid}
%%%%%%%%%%%%%%%%%%%%%%%%%%

Let $\varSigma^2$ be the catenoid generated by the revolution around the $z$-axis of the Euclidean catenary $z\mapsto (\cosh(z),0,z)$. We now parametrize this curve by arc-length, obtaining $c(u)= (\sqrt{1+u^2},0,\mbox{arcsinh}(u))$. The metric of the catenoid is given by $\rmd u^2+(1+u^2)\rmd v^2$, and the curve $\ell$ is the waist circle of the catenoid. Here, $a(u)=\sqrt{1+u^2}$ and $\rho=u^{\alpha}\sqrt{1+u^2}$. Therefore, no parallel of the catenoid is an $\alpha$-catenary. The equation of the catenaries \eqref{kc5} becomes 
$$
u(1+u^2)\ddot{u}-[2u^2+\alpha(1+u^2)]\dot{u}^2-(1+2\alpha)u^2-(1+\alpha)u^4-\alpha=0.
$$
In the particular case $\alpha=1$, this equation is
\begin{equation}
u(1+u^2) \ddot{u}-(1+3 u^2) \dot{u}^2-2 u^4-3u^2-1=0.    
\end{equation}

In the catenoid, Eq. \eqref{SolutionByQuadratureSurfRev} with $\alpha=1$ becomes
\[
 v-v_0 =  \pm \int_{u_0}^u \frac{c\,\rmd t}{\sqrt{1+t^2}\sqrt{t^2(1+t^2)-c^2}} = \pm c \, I(u_0,u).
\]
We will show that $I(u_0,u)$ is finite for any values of $u_0$ and $u$. Therefore, $v$ must be contained in a finite interval. This fact implies that catenaries on a catenoid always diverge. More precisely, $\gamma(v)$ may rotate a few times around the catenoid's axis; after that, it is asymptotic to some meridian $v=v^*$. Indeed, let $(v_m,v_M)$ be the maximal interval where the catenary $\gamma$ is defined, where $v_m,v_M<\infty$. The limit $\lim_{v\to v_M}u(v)$ must be infinite. Otherwise, $u(v)$ would converge to some value $u_M$. If this were the case, we could then extend $\gamma(v)$ to an interval larger than $(v_m,v_M)$, which would contradict the fact that $(v_m,v_M)$ is maximal. In conclusion, every catenary on a catenoid with respect to the waist circle must blow up in finite time.

%%%%%%%
\section{Catenaries in the Gru\v{s}in plane}\label{sec5}

The Gru\v{s}in plane $\mathbb{G}_2$ is one of the simplest examples of a sub-Riemannian geometry \cite{bellaiche1996GrusinPlane,mm}. 
 Historically, the Gru\v{s}in plane appeared in the works of Gru\v{s}in \cite{gs1,gs2} on the hypoelliptic operator $\partial_x+x^2\partial_y^2$.
 The Gru\v{s}in plane is defined by $\mathbb{G}_2=(\mathbb{R}^2_{+},\rmd s^2)$, where $\mathbb{R}_{+}^2=\{(u,v)\in\mathbb{R}^2:u>0\}$ and the metric is
$$
 \rmd s^2=\rmd u^2+\frac{1}{u^2}\rmd v^2.
$$
The geodesics of $\mathbb{G}_2$ are obtained as solutions of $\ddot{u}+\dot{v}^2/u^3=0$ and $\ddot{v}-2\dot{u}\dot{v}/u=0$. Horizontal lines $\gamma(u)=(u,v_0)$ are geodesics. On the other hand, if $\dot{v}\not=0$, then $u(t) = u_0 \cos\omega(t-t_0)$, for some $t_0,u_0\in \mathbb{R}$. Finally, we can parametrize the non-vertical geodesics of $\mathbb{G}_2$ as 
\begin{equation}\label{eq::GeodesicsOfGlusin}
 \beta_{u_0,v_0}(s) = \left(u_0\cos \frac{s}{u_0},v_0 -\frac{u_0^2}{2}(\frac{s}{u_0}+\frac{1}{2}\sin \frac{2s}{u_0}) \right). 
\end{equation}
The geodesics of the Glu\v{s}in plane are depicted in Fig. \ref{fig::CatGeodG2}. 

\begin{figure}[hbtp]
    \centering
    \includegraphics[width=0.75\linewidth]{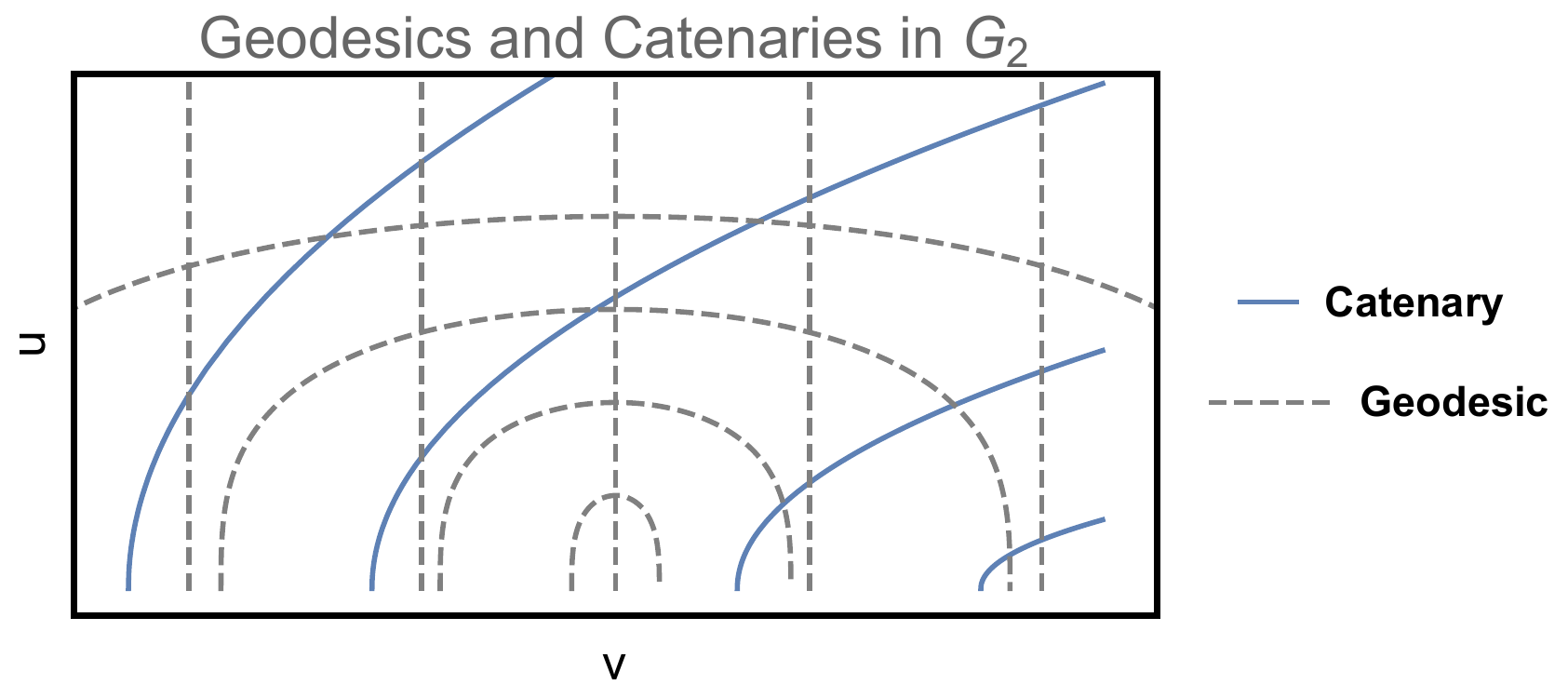}
    \caption{The catenaries and geodesics of the Glu\v{s}in plane $\mathbb{G}_2=(\mathbb{R}_+^2,\rmd s^2=\rmd u^2+\frac{1}{u^2}\rmd v^2)$. The geodesics are either lines $v=\mbox{const.}$ or the convex curves obtained in Eq. \eqref{eq::GeodesicsOfGlusin}. The catenaries are given by the graph of the square root function obtained in Eq. \eqref{eq::CatenaryOnGlusin}. The figure shows the $u$- and $v$-axes in a $1:1$ proportion.}
    \label{fig::CatGeodG2}
\end{figure}

 Fix the curve $\ell(v)=(1,v)$ as a reference line. Note that $\ell$ is not a geodesic, but it can be considered a reference line according to Definition \ref{Def::alpCatenaryRespectToNonGeod}.  The geodesic orthogonal to $\ell(v)$ at the point $(1,v)$ is the curve $\gamma(u)=(u,v)$, which implies that the coordinate system $\psi\colon\mathbb{R}^2\to\mathbb{G}_2$, $\psi(u,v)=(u,v)$, is semi-geodesic. Here, the function $G$ in Eq. \eqref{G} is $G(u,v)=1/u$. 

Let us find the $\alpha$-catenaries in $\mathbb{G}_2$. Assuming that the $\alpha$-catenary is a graph over $\ell(v)$, i.e., $u=u(v)$, the $\alpha$-catenary equation \eqref{et3} is 
 $$
      \frac{u^4}{1+u^2\dot{u}^2}\left(\frac{\ddot{u}}{u}+\frac{2\dot{u}^2}{u^2}+\frac{1}{u^4}\right)=\alpha.
 $$
In the particular case of $\alpha=1$, the equation to solve is $u\dot{u}+\dot{u}^2=0$. This equation can be integrated obtaining 
 \begin{equation}\label{eq::CatenaryOnGlusin}
     u(v)=\mu\sqrt{2 v+\nu} ,\quad \mu,\nu\in\mathbb{R}.
 \end{equation}
See Fig. \ref{fig::CatGeodG2}. Let us observe that under the change of coordinates $u=\sqrt{\bar{u}}$ and $v=\frac{1}{2}\bar{v}$, we obtain a conformal metric for the Gru\v{s}in plane: 
$$
     \rmd \bar{s}^2 = \rme^{-\ln4\bar{u}}\Big(\rmd\bar{u}^2+\rmd\bar{v}^2\Big).
$$
In this new local coordinates system, the catenaries with respect to $\ell(v)=(1,v)$ are straight lines with positive inclination, $\bar{u}= \bar{\mu}\,\bar{v}+\bar{\nu}$, $\bar{\mu}>0$.

\section*{Acknowledgements} 
Luiz da Silva acknowledges the support provided by the Mor\'a Miriam Rozen Gerber fellowship for Brazilian postdocs and the Faculty of
Physics Postdoctoral Excellence Fellowship. Rafael L\'opez  is a member of the IMAG and of the Research Group ``Problemas variacionales en geometr\'{\i}a'',  Junta de Andaluc\'{\i}a (FQM 325). This research has been partially supported by MINECO/MICINN/\break FEDER grant no. PID2020-117868GB-I00,  and by the ``Mar\'{\i}a de Maeztu'' Excellence Unit IMAG, reference CEX2020-001105- M, funded by MCINN/AEI/\break10.13039/501100011033/ CEX2020-001105-M.

% \bibliographystyle{ijmart}
% \bibliography{biblio-catenary}

\providecommand{\MR}{\relax\ifhmode\unskip\space\fi MR }
% \MRhref is called by the amsart/book/proc definition of \MR.
\providecommand{\MRhref}[2]{%
  \href{http://www.ams.org/mathscinet-getitem?mr=#1}{#2}
}
\providecommand{\href}[2]{#2}

\end{document}